\newtheorem{theorem}{Theorem}[section]
\newtheorem{lemma}[theorem]{Lemma}
\newtheorem{prop}[theorem]{Proposition}
\theoremstyle{definition}
\newtheorem{definition}[theorem]{Definition}
\theoremstyle{remark}
\newtheorem{remark}[theorem]{\textbf{Remark}}
\numberwithin{equation}{section}
\begin{document}

\title[On the Davis-Wielandt shell and the Davis-Wielandt index]{On the Davis-Wielandt shell of an operator and the Davis-Wielandt index of a normed linear space}

\author[Pintu Bhunia, Debmalya Sain and  Kallol Paul]{ Pintu Bhunia, Debmalya Sain and Kallol Paul}

\address[Bhunia] { Department of Mathematics, Jadavpur University, Kolkata 700032, West Bengal, India}
\email{pintubhunia5206@gmail.com}

\address{(Sain) Department of Mathematics, Indian Institute of Science, Bengaluru 560012, Karnataka, India}
\email{saindebmalya@gmail.com}

\address[Paul] {Department of Mathematics, Jadavpur University, Kolkata 700032, West Bengal, India}
\email{kalloldada@gmail.com}

\thanks{Pintu Bhunia would like to thank UGC, Govt. of India for the financial support in the form of SRF. Dr. Debmalya Sain feels elated  to acknowledge the joyful presence of his beloved junior Mr. Debabrata Dey in every sphere of his life. Prof. Paul would like to thank RUSA 2.0, Jadavpur University for  partial support. }
\thanks{}

\subjclass[2010]{Primary 47A12, Secondary 46B20,47L05}
\keywords{Davis-Wielandt shell, Davis-Wielandt radius, Davis-Wielandt index, Inequalities in normed spaces, Polyhedral Banach spaces, Linear operators}



\date{}
\maketitle

\begin{abstract}
We study the Davis-Wielandt shell and the Davis-Wielandt radius of an operator on a normed linear space $\mathcal{X}$. We show that after a suitable modification, the modified Davis-Wielandt radius defines a norm on  $\mathcal{L}(\mathcal{X})$ which is equivalent to the usual operator norm on $\mathcal{L}(\mathcal{X})$. We introduce the Davis-Wielandt index of a normed linear space and compute its value explicitly in case of some particular polyhedral Banach spaces. We also present a general method to estimate the Davis-Wielandt index of any polyhedral Banach space.
\end{abstract}

\section{Introduction}

\smallskip

\noindent The purpose of this article is to study the Davis-Wielandt shell of an operator between normed linear spaces and some related geometric and analytic properties. Let us first establish the relevant notations and terminologies to be used throughout the article.

\noindent For a complex number $c\in \mathbb{C}$, we write $\mathcal{R}(c)$ and $\mathcal{I}(c)$ for the real part of $c$ and the imaginary part of $c$,  respectively. Let $\mathcal{X}$ be a normed linear space, real or complex and let $\mathcal{X}^*$ denote the dual space of $\mathcal{X}.$  Let $ B_{\mathcal{X}}= \{ x \in \mathcal{X}~:\|x\| \leq 1 \} $ and $  S_{\mathcal{X}}= \{ x \in \mathcal{X}~:\|x\| = 1 \} $ be the unit ball and the unit sphere of $ \mathcal{X}$, respectively. Given a convex subset $K$ of $ \mathcal{X}, $  let $\textit{ext}~K$ denote the set of all extreme points of $ K. $ The Krein-Milman theorem asserts that $\textit{ext}~ K \neq \emptyset$, if $K$ is non-empty compact.  For each $x\in S_{\mathcal{X}}$, let us define $\mathcal{J}(x)=\left\{ f\in \mathcal{X}^*: f(x)=1=\|f\| \right\}.$ We denote by $ \Pi$ the set $\{(x,f): x\in S_{\mathcal{X}}, f\in \mathcal{J}(x)\}$.
Let $\mathcal{L}(\mathcal{X})$ be the normed linear space of all bounded linear operators on $ \mathcal{X} $, endowed with the usual operator norm. 
\noindent For a complex Hilbert space $\mathcal{H},$ the Davis-Wielandt shell and the Davis-Wielandt radius of an operator $T\in \mathcal{L}(\mathcal{H})$ (see \cite{D,W}) are defined respectively as: 
\begin{eqnarray*}
 DW(T)&= &\left\{ \left(\langle Tx, x \rangle, \|Tx\|^2 \right) : x\in \mathcal{H}, \|x\|=1 \right\},\\
 dw(T)&= &\sup \left\{ \sqrt{|\langle Tx, x \rangle|^2+ \|Tx\|^4} : x\in \mathcal{H}, \|x\|=1 \right \}.
\end{eqnarray*}
The above concepts are closely related to the numerical range of the operator $T$, studied initially by  Bauer \cite{B} and Lumar \cite{L}, in the more general setting of normed linear spaces. 
 Given a $ T \in \mathcal{L}(\mathcal{X}), $ the numerical range and the numerical radius of $ T $ are defined respectively as: 
\begin{eqnarray*}
W(T)&=& \left\{ x^*(Tx) : (x,x^*)\in \Pi \right\},\\
 w(T)&=& \sup \left\{ |x^*(Tx)| : (x,x^*)\in \Pi \right\}.
\end{eqnarray*}
Computation of the numerical index of a normed linear space involves the interaction between the geometry of the space and linear operators acting on it. We refer the readers to \cite{DMPW,MJ,MMP1,MMP2,M,SPBB} and references therein for more information on this topic of interest.
Given a normed linear space $\mathcal{X}$, the numerical index of $\mathcal{X}$ is defined as:
 $$n(\mathcal{X})=\inf\{w(T):T\in S_{\mathcal{L}(\mathcal{X})}\}.$$
Motivated by the study on the numerical index of a normed linear space, we introduce another numerical constant associated with a given space, which is related to the study of the Davis-Wielandt shell of an operator and is christened the Davis-Wielandt index. To this end, we first introduce the Davis-Wielandt set of an operator  at a point of unit norm. For $ T \in \mathcal{L}(\mathcal{X}), $ the Davis-Wielandt set of $T$ at the point $x\in S_{\mathcal{X}}$ is defined as: 
$$DW(T_x)=\left \{\left (x^*(Tx), \|Tx\|^2\right ):x^* \in \mathcal{J}(x) \right \}.$$
Next we recall that the Davis-Wielandt shell and corresponding the Davis-Wielandt radius of an operator $T$ on $\mathcal{X}$. The Davis-Wielandt shell and the Davis-Wielandt radius of $ T\in \mathcal{L}(\mathcal{X}) $ are defined respectively as: 
\begin{eqnarray*}
DW(T)&=& \left\{ \left(x^*(Tx), \|Tx\|^2\right) : (x,x^*)\in \Pi \right\},\\
dw(T)&=&\sup \left\{ \sqrt{| x^*(Tx) |^2+\|Tx\|^4}~:(x,x^*)\in \Pi \right\}.
\end{eqnarray*}
\noindent It is easy to observe that the Davis-Wielandt radius $dw(.)$ does not define a norm on $\mathcal{L}(\mathcal{X})$. However, the following modification allows us to define a norm on $\mathcal{L}(\mathcal{X})$. For $ T \in \mathcal{L}(\mathcal{X}), $ the modified Davis-Wielandt shell and the modified Davis-Wielandt radius of $ T $ are defined respectively as: 
\begin{eqnarray*}
 DW^*(T)&=& \left\{ \left(x^*(Tx), \|Tx\|\right) : (x,x^*)\in \Pi \right\},\\
dw^*(T)&=&\sup \left\{ \sqrt{| x^*(Tx) |^2+\|Tx\|^2}~:(x,x^*)\in \Pi \right\}.
\end{eqnarray*}
\noindent Now we are in a position to introduce the Davis-Wielandt index and the modified Davis-Wielandt index of a normed linear space.
Given a normed linear space $ \mathcal{X} $, the Davis-Wielandt index and the modified Davis-Wielandt index of $ \mathcal{X} $ are defined respectively as: 
\begin{eqnarray*}
\eta_{dw}(\mathcal{X})&=&\inf \left\{ dw(T):T\in S_{\mathcal{L}(\mathcal{X})} \right \},\\
 \eta_{dw^*}(\mathcal{X})&=&\inf \left\{ dw^*(T):T\in S_{\mathcal{L}(\mathcal{X})} \right \}.
\end{eqnarray*}

\smallskip
  
\noindent We are especially interested in computing the (modified) Davis-Wielandt index of polyhedral normed linear spaces. Let us first recall that a finite-dimensional real Banach space is said to be polyhedral if $\textit{ext}~ B_{\mathcal{X}}$ is finite. The following definition, introduced in \cite{SPBB}, is useful in studying the geometry of polyhedral Banach spaces.

\begin{definition}
Let $ \mathcal{X} $ be a polyhedral Banach space. Let $ F $ be a facet of the unit ball $ B_{\mathcal{X}} $ of $  \mathcal{X}. $ A functional $ f \in S_{\mathcal{X}^{*}} $ is said to be a \emph{supporting functional corresponding to the facet $ F $ of the unit ball $ B_{\mathcal{X}} $} if the following two conditions are satisfied:\\
$ (i) $ $ f $ attains norm at some point $ v $ of $ F, $\\
$ (ii) $ $ F =(v + \ker f)\cap S_{\mathcal{X}}. $
\end{definition}
\noindent Now we recall the basic definitions related to the concept of smoothness in a normed linear space $\mathcal{X}$. A normed linear space $\mathcal{X}$ is said to be smooth at a non-zero point $x\in {\mathcal{X}}$ if $\mathcal{J}\left(\frac{x}{\|x\|}\right )$ is singleton. $\mathcal{X}$ is said to be smooth if $\mathcal{X}$ is smooth at each non-zero point $x\in \mathcal{X}$.
\noindent Next we define the following two notations useful to the present study. For $T\in \mathcal{L}(\mathcal{X})$, let $W_T$ and $M_T$ be denote the numerical radius attainment set and the norm attainment set of $T$, defined respectively as: 
\begin{eqnarray*} 
W_T&=& \left\{ x\in S_{\mathcal{X}}:  ~\exists~ x^{*}\in \mathcal{J}(x) ~\mbox{such that}~ |x^{*}(Tx)|=w(T) \right \},\\
M_T &=& \left \{ x\in S_{\mathcal{X}}: \|Tx\|=\|T\| \right \}. 
\end{eqnarray*} 

\smallskip

\noindent This article is demarcated into three section, including the introductory one. In the second section we explore the convexity properties of the Davis-Wielandt shell of an operator, both from the local and the global perspective. We also study the norming properties of the modified Davis-Wielandt radius in $\mathcal{L}(\mathcal{X})$ and its connection with the usual operator norm. The third section is devoted to the investigation of the Davis-Wielandt index of a normed linear space. We estimate the Davis-Wielandt index of polyhedral Banach spaces. We also obtain the exact values of the Davis-Wielandt index in case of some particular polyhedral Banach spaces.

\section{On the Davis-Wielandt shell}
\smallskip

\noindent  
We begin this section with the study of the local convexity properties of the Davis-Wielandt set of a bounded linear operator at a point of unit norm. First we prove the following theorem.

\begin{theorem}\label{DWset}
Let $\mathcal{X}$ be a normed linear space. Let $T\in \mathcal{L}(\mathcal{X})$ and let $x\in S_{\mathcal{X}}$. Then $DW(T_x)$ is convex.  If $\mathcal{X}$ is finite-dimensional, then $DW(T_x)$ is the convex hull of $\Big \{ (x^*(Tx), \|Tx\|^2) : x^*\in \mbox{ext}~~B_{\mathcal{X}^*}, x^*(x)=1 \Big \}.$
\end{theorem}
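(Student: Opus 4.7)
\smallskip

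\noindent \textbf{Proof plan.} The core observation is that since $x$ is fixed, the quantity $\|Tx\|^2$ is a fixed scalar, and the set $DW(T_x)$ lives on the horizontal line $\{(z, \|Tx\|^2) : z \in \mathbb{C}\}$ (or $\mathbb{R}$). Thus the problem reduces to studying the first-coordinate set $\{x^*(Tx) : x^* \in \mathcal{J}(x)\}$, which is the image of $\mathcal{J}(x)$ under the linear evaluation functional $x^* \mapsto x^*(Tx)$.

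\noindent \emph{Step 1: Convexity of $\mathcal{J}(x)$.} First I would verify that $\mathcal{J}(x)$ is convex: for $f,g \in \mathcal{J}(x)$ and $\lambda \in [0,1]$, the functional $h = \lambda f + (1-\lambda) g$ satisfies $h(x) = 1$ and $\|h\| \leq 1$, while the equality $h(x)=1$ with $\|x\|=1$ forces $\|h\|=1$. Since $DW(T_x)$ is the image of the convex set $\mathcal{J}(x)$ under the affine map $x^* \mapsto (x^*(Tx),\|Tx\|^2)$, it is convex. This settles the first assertion.

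\noindent \emph{Step 2: The finite-dimensional case.} Assume $\dim \mathcal{X} < \infty$. Then $B_{\mathcal{X}^*}$ is compact in the norm topology, and $\mathcal{J}(x)$ is a closed subset of it, hence compact. Being convex and compact in a finite-dimensional space, the Krein--Milman theorem (in fact Carath\'eodory's theorem) gives $\mathcal{J}(x) = \mathrm{co}(\textit{ext}\,\mathcal{J}(x))$. Applying the affine map from Step 1 and using that affine images commute with convex hulls, we get
\begin{equation*}
DW(T_x) = \mathrm{co}\Bigl\{(x^*(Tx),\|Tx\|^2) : x^* \in \textit{ext}\,\mathcal{J}(x)\Bigr\}.
\end{equation*}

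\noindent \emph{Step 3: Identifying extreme points.} The remaining task, which I expect to be the delicate step, is to prove that $\textit{ext}\,\mathcal{J}(x) = \{x^* \in \textit{ext}\,B_{\mathcal{X}^*} : x^*(x) = 1\}$. One inclusion is immediate: if $x^* \in \textit{ext}\,B_{\mathcal{X}^*}$ lies in $\mathcal{J}(x)$, then any convex decomposition of $x^*$ inside $\mathcal{J}(x) \subseteq B_{\mathcal{X}^*}$ is in particular a decomposition inside $B_{\mathcal{X}^*}$, forcing triviality. For the converse, suppose $x^* \in \textit{ext}\,\mathcal{J}(x)$ and $x^* = \lambda f + (1-\lambda)g$ with $f,g \in B_{\mathcal{X}^*}$ and $\lambda \in (0,1)$. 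The key point is to lift this decomposition into $\mathcal{J}(x)$: writing $1 = x^*(x) = \lambda f(x) + (1-\lambda)g(x)$ with $|f(x)|,|g(x)| \leq 1$, a convexity argument (separating real and imaginary parts so that the bound on the real part forces $\mathcal{R}(f(x)) = 1$, and then the modulus bound forces the imaginary part to vanish) shows $f(x) = g(x) = 1$, hence $f, g \in \mathcal{J}(x)$. Extremality of $x^*$ in $\mathcal{J}(x)$ then yields $f = g = x^*$.

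\noindent The main obstacle is precisely the complex-scalar version of this last argument: one must show that $\lambda c_1 + (1-\lambda) c_2 = 1$ with $|c_1|,|c_2| \leq 1$ and $\lambda \in (0,1)$ forces $c_1 = c_2 = 1$, which follows by noting that the real parts already attain their maximum in the convex combination. Once this identification of extreme points is in hand, substitution into the formula from Step~2 completes the proof.
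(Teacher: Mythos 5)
Your proposal is correct and follows essentially the same route as the paper: convexity via convex combinations in $\mathcal{J}(x)$, and the finite-dimensional statement via decomposing each $x^*\in\mathcal{J}(x)$ into extreme supporting functionals at $x$. The only difference is that your Step 3 (showing $\textit{ext}\,\mathcal{J}(x)=\{x^*\in\textit{ext}\,B_{\mathcal{X}^*}:x^*(x)=1\}$, using the real-part argument to see that $\mathcal{J}(x)$ is a face of $B_{\mathcal{X}^*}$) rigorously justifies a decomposition the paper simply asserts, which is a welcome addition rather than a departure.
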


\begin{proof}
Without loss of generality we assume that $ DW(T_x)$ is not singleton. Let $\left (x^*(Tx), \|Tx\|^2 \right )$, $\left (y^*(Tx), \|Tx\|^2 \right ) \in DW(T_x)$, where $x^*,y^*\in \mathcal{J}({{x}})$.  It is easy to see that $\lambda x^*+(1-\lambda)y^* \in S_{\mathcal{X}^*} $ and $(\lambda x^*+(1-\lambda)y^*)(x)=1$, for all $\lambda \in [0,1].$ Therefore, $\lambda \left (x^*(Tx), \|Tx\|^2 \right )+(1-\lambda)\left (y^*(Tx), \|Tx\|^2 \right )= \Big((\lambda x^*+(1-\lambda)y^*)(Tx), \|Tx\|^2 \Big)\in DW(T_x)$, for all $\lambda \in [0,1].$ So, $ DW(T_x)$ is convex.\\
Let dim $(\mathcal{X}) =n$ and let $x^*\in \mathcal{J}({{x}})$. Then $x^*$ can be written as a convex combination of extreme supporting functionals at $x$. Let $x^*_i$ be the extreme supporting functionals at $x$, for $i=1,2,\ldots,m$. Therefore, there exist scalars $ \lambda_1,\lambda_2,\ldots,\lambda_{m}\geq 0$ with $\sum_{i=1}^m\lambda_i=1$ such that $x^*=\sum_{i=1}^m\lambda_ix_i^*$. So, $x^*(Tx)=\sum_{i=1}^m\lambda_i x^*_i(Tx)$. This proves that  $DW(T_x)$ is the convex hull of $\Big \{ (x^*(Tx), \|Tx\|^2) : x^*\in \mbox{ext}~~B_{\mathcal{X}^*}, x^*(x)=1 \Big \}.$
\end{proof}

\begin{remark}
We would like to remark that if $\mathcal{X}$ is a real normed linear space, then it is easy to observe that  $DW(T_x)$ is the line segment joining 
$\Big (\inf \{x^*(Tx): x^*\in \mbox{ext}~~B_{\mathcal{X}^*}, x^*(x)=1\}, \|Tx\|^2 \Big )$ and  $\Big (\sup \{x^*(Tx): x^*\in \mbox{ext}~~B_{\mathcal{X}^*}, x^*(x)=1\}, \|Tx\|^2 \Big )$.
\end{remark}

\noindent Next we study the convexity properties of the Davis-Wielandt shell of an operator $T$ defined on a normed linear space $\mathcal{X}$. In the case of Hilbert space, this convexity property is well-known (see \cite[Th. 2.2 and Th. 2.3]{LPS}, as given in the following theorem.

\begin{theorem}\label{th-2dim}
Let $\mathcal{H}$ be a Hilbert space and let $T\in \mathcal{L}(\mathcal{H})$. Then the following hold true:\\
$(i)$ If dim $(\mathcal{H}) =2$, then $DW(T)$ is convex if and only if $T$ is normal.\\
$(ii)$ If dim $(\mathcal{H}) \geq 3$, then $DW(T)$ is convex.
\end{theorem}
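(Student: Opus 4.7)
The plan is to reduce both parts to well-understood convexity theorems for joint numerical ranges and then handle the low-dimensional normal versus non-normal dichotomy by hand.

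For part $(ii)$, I would first recast $DW(T)$ as a joint numerical range of three Hermitian operators. Write $T = A + iB$ with $A = \tfrac{1}{2}(T+T^*)$ and $B = \tfrac{1}{2i}(T-T^*)$ Hermitian, and set $C = T^*T$. Since $\langle Tx,x\rangle = \langle Ax,x\rangle + i\langle Bx,x\rangle$ and $\|Tx\|^{2} = \langle Cx,x\rangle$, the natural identification $\mathbb{C}\times\mathbb{R}\cong\mathbb{R}^{3}$ gives
$$ DW(T) \;=\; \bigl\{(\langle Ax,x\rangle,\langle Bx,x\rangle,\langle Cx,x\rangle):\|x\|=1\bigr\} \;=\; W(A,B,C). $$
At this point I would invoke the classical Au-Yeung--Poon theorem: for three Hermitian operators on a complex Hilbert space of dimension at least $3$, the joint numerical range is convex. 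Applied to $(A,B,C)$ this gives convexity of $DW(T)$ whenever $\dim\mathcal{H}\ge 3$.

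For the $(\Leftarrow)$ implication of part $(i)$, assume $T$ is normal on a $2$-dimensional Hilbert space. The spectral theorem reduces us, via unitary conjugation (which preserves $DW$), to $T=\mathrm{diag}(\lambda_{1},\lambda_{2})$. For a unit vector $x=(\xi_{1},\xi_{2})$, setting $t=|\xi_{1}|^{2}\in[0,1]$ we compute
$$ \langle Tx,x\rangle \;=\; t\lambda_{1}+(1-t)\lambda_{2}, \qquad \|Tx\|^{2} \;=\; t|\lambda_{1}|^{2}+(1-t)|\lambda_{2}|^{2}. $$
As $t$ ranges over $[0,1]$ this traces out the closed line segment in $\mathbb{C}\times\mathbb{R}$ from $(\lambda_{2},|\lambda_{2}|^{2})$ to $(\lambda_{1},|\lambda_{1}|^{2})$, which is convex. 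For the $(\Rightarrow)$ implication, assume $T$ is not normal in dimension $2$. By Schur's theorem we may take $T=\bigl(\begin{smallmatrix}a & c\\ 0 & b\end{smallmatrix}\bigr)$ with $c\neq 0$. Parametrising $x=(e^{i\alpha}\cos\theta,e^{i\beta}\sin\theta)$ and computing $\langle Tx,x\rangle$ and $\|Tx\|^{2}$, I would show that fixing $\theta$ and varying the phase difference $\phi=\beta-\alpha$ makes the first coordinate travel around a circle while the second coordinate stays constant (on a curve depending on $\theta$); the resulting image is a genuine $2$-surface in $\mathbb{R}^{3}$ whose convex hull strictly contains it. A clean sanity check is $T=\bigl(\begin{smallmatrix}0&1\\0&0\end{smallmatrix}\bigr)$, for which the computation yields $DW(T)=\{(z,s)\in\mathbb{C}\times\mathbb{R}:|z|^{2}=s(1-s)\}$, the sphere of radius $\tfrac{1}{2}$ centred at $(0,\tfrac{1}{2})$ — clearly not convex. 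The general non-normal case reduces to this after translation and scaling, or alternatively by directly verifying that the midpoint of a pair of diametrically opposite points on the surface fails to lie in $DW(T)$.

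The main obstacle is part $(ii)$: the Au-Yeung--Poon convexity theorem is far from elementary, as the three-Hermitian joint numerical range is not amenable to the Toeplitz--Hausdorff averaging trick, which only works for pairs. A self-contained proof would require either citing that result or carrying out a careful dimension-$3$ reduction using unit vectors in a cleverly chosen three-dimensional subspace, together with a continuous deformation argument connecting two preimages of given boundary points through a path of unit vectors whose image passes through every intermediate convex combination. The dimension-$2$ non-normal case is computationally explicit but tedious; the cleanest presentation is probably to normalise via a unitary and a shift so that the diagonal entries become $\pm\mu$ for some $\mu\ge 0$, and then exhibit a specific pair of points in $DW(T)$ whose midpoint lies off the surface.
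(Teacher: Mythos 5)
The paper does not actually prove this theorem: it is quoted as well known, with the proof deferred to Theorems~2.2 and~2.3 of the cited reference [LPS] of Li, Poon and Sze. Your outline is essentially the argument of that reference: writing $DW(T)$ as the joint numerical range $W(A,B,T^*T)$ with $A=\tfrac12(T+T^*)$, $B=\tfrac{1}{2i}(T-T^*)$, invoking the Au-Yeung--Poon convexity theorem for three Hermitian forms when $\dim\mathcal{H}\ge 3$, and classifying the two-dimensional case as a segment (normal) versus a nondegenerate ellipsoid without interior (non-normal). So the approach is the right one; there is no conceptual gap, only two places where your sketch stops short of a proof. First, for part $(ii)$ when $\dim\mathcal{H}>3$ (in particular in infinite dimensions) you cannot quite ``invoke'' Au-Yeung--Poon directly, since it is a statement about $n\times n$ Hermitian matrices with $n\ge 3$; you need the standard compression step: given two points of $DW(T)$ realized by unit vectors $x,y$, choose a three-dimensional subspace $\mathcal{H}_0$ containing them and observe that for $u\in S_{\mathcal{H}_0}$ the triple $\bigl(\langle Au,u\rangle,\langle Bu,u\rangle,\langle T^*Tu,u\rangle\bigr)$ depends only on the compressions of $A$, $B$ and $T^*T$ to $\mathcal{H}_0$ (note it is $T^*T$, not $T$, that you compress), so the two points lie in a convex subset of $DW(T)$. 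You mention this reduction only as an alternative, but it is actually required. Second, in part $(i)$ the general non-normal $2\times2$ operator does \emph{not} reduce to the nilpotent Jordan block by a translation and scaling: after centering, the Schur form is $\bigl(\begin{smallmatrix}\mu & c\\ 0 & -\mu\end{smallmatrix}\bigr)$ with $c\ne 0$ and possibly $\mu\ne 0$, and no affine change of $T$ removes $\mu$. Since $DW(\alpha T+\beta I)$ is an affine image of $DW(T)$ (because $\|(\alpha T+\beta)x\|^2$ is an affine function of $\bigl(\langle Tx,x\rangle,\|Tx\|^2\bigr)$), affine reductions are legitimate, but you must still carry out the two-parameter computation for this normal form and verify that the image is a genuine ellipsoid surface --- or, as you suggest at the end, exhibit one explicit pair of points whose midpoint is omitted. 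With those two steps filled in, your proposal is a complete and correct proof along the same lines as the source the paper cites.
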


\smallskip

\noindent However, the above convexity property may not hold true in the more general setting of normed linear spaces. To illustrate this, we first make note of the following proposition.

\begin{prop}\label{prop1}
Let $\mathcal{X}$ be a normed linear space and let $T\in \mathcal{L}(\mathcal{X})$. If $W(T)$ is not convex then $DW(T)$ is not convex.
\end{prop}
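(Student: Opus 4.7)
The plan is to argue by contrapositive: I will show that if $DW(T)$ is convex then $W(T)$ must be convex as well. The guiding observation is that $W(T)$ is nothing but the projection of $DW(T)$ onto the first coordinate. Concretely, let $\pi$ denote the linear projection $\pi(z,t)=z$ from $\mathbb{C}\times\mathbb{R}$ (or $\mathbb{R}\times\mathbb{R}$, depending on whether $\mathcal{X}$ is complex or real) onto its first factor. Comparing the definitions of the Davis-Wielandt shell and of the numerical range given earlier in the introduction, one reads off immediately that
$$\pi\bigl(DW(T)\bigr)=\bigl\{x^{*}(Tx):(x,x^{*})\in\Pi\bigr\}=W(T).$$

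Since $\pi$ is linear, it is in particular affine, and affine maps send convex sets to convex sets. Hence convexity of $DW(T)$ would force convexity of $W(T)=\pi(DW(T))$, and the contrapositive yields the claim. I do not anticipate any real obstacle: the argument reduces to the elementary fact that linear projections preserve convexity, and the only minor point to mention is that the projection can be taken with codomain $\mathbb{R}$ or $\mathbb{C}$ according to the scalar field, so that the argument works uniformly in both the real and complex settings.
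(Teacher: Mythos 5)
Your argument is correct and is essentially the paper's own proof in contrapositive form: the paper directly exhibits the convex combination $\lambda_0\left(x^*(Tx),\|Tx\|^2\right)+(1-\lambda_0)\left(y^*(Ty),\|Ty\|^2\right)$ and observes it cannot lie in $DW(T)$ because its first coordinate is not in $W(T)$, which is exactly your observation that $W(T)$ is the image of $DW(T)$ under the first-coordinate projection. Your packaging via the fact that affine images of convex sets are convex is a clean, equivalent way to say the same thing.
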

\begin{proof}
Since $W(T)$ is not convex, there exist $x^*(Tx), y^*(Ty)\in W(T)$ and $\lambda_0\in (0,1)$ such that $\lambda_0x^*(Tx)+(1-\lambda_0) y^*(Ty)\notin W(T)$, where $x,y\in S_{\mathcal{X}}$ and $x^*\in \mathcal{J}({{x}})$, $y^*\in \mathcal{J}({{y}})$. Therefore, there exists no $z^*\in \mathcal{J}({{z}})$, $z \in S_{\mathcal{X}}$ such that   $\lambda_0x^*(Tx)+(1-\lambda_0) y^*(Ty)=z^*(Tz)$.
Clearly, $\left (x^*(Tx), \|Tx\|^2 \right )$, $\left (y^*(Ty), \|Ty\|^2 \right ) \in DW(T)$, but $\lambda_0\left (x^*(Tx), \|Tx\|^2 \right )+(1-\lambda_0)\left (y^*(Ty), \|Ty\|^2 \right ) \notin DW(T)$. So, $DW(T)$ is not convex. 
\end{proof}

\noindent We now show that $DW(T)$ is not necessarily convex for $T\in \mathcal{L}(\mathcal{X}),$ even if dim $(\mathcal{X}) \geq 3$. This illustrates that Theorem \ref{th-2dim} $(ii)$ cannot be extended to the setting of Banach spaces.

\begin{theorem}\label{notconvex}
Let $\mathcal{X}={{\ell}}^n_p$, $p\neq 1,2,\infty$ and $n\geq 2$. Let $T\in \mathcal{L}(\mathcal{X})$ be such that $$T=\left(\begin{array}{ccc}
    a&b \\
    c&d
	\end{array}\right)\oplus 0_{n-2\times n-2},~~ a,d\in \mathbb{R}\setminus \{0\},~~b,c\in \mathbb{C}\setminus \{0\} $$
	and $(i)$~~$a+d=0, (ii)~~ |b|=|c|, (iii)~~ \mathcal{R}(b) \mathcal{I}(c)+ \mathcal{R}(c) \mathcal{I}(b)=0.$ Then $DW(T)$ is not convex.
\end{theorem}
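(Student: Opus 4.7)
The plan is to invoke Proposition \ref{prop1} and reduce the problem to showing that the numerical range $W(T)$ is not convex on $\ell^n_p$ for $p\neq 1,2,\infty$.

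As a first step, I would unpack the hypotheses on $b,c$. Condition (ii) $|b|=|c|$ together with (iii) $\mathcal{R}(b)\mathcal{I}(c)+\mathcal{R}(c)\mathcal{I}(b)=0$ express precisely that $bc\in\mathbb{R}$ and $|bc|=|b|^2$, which forces $c=\epsilon\bar{b}$ for some $\epsilon\in\{+1,-1\}$. Combined with $d=-a$ and $a\in\mathbb{R}\setminus\{0\}$, the operator $T$ is a very structured $2\times 2$ block extended by zeros, and the two cases $\epsilon=\pm 1$ cover both the Hermitian-like and the non-Hermitian-like situations.

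Next I would write down $x^*(Tx)$ explicitly using the $\ell^p$ duality map. For a unit vector $x=(r_1 e^{i\gamma_1},r_2 e^{i\gamma_2},0,\ldots,0)\in S_{\ell^n_p}$ with $r_1^p+r_2^p=1$, one has $x_j^*=r_j^{p-1}e^{-i\gamma_j}$ and hence, writing $\phi=\arg b$ and $\theta=\gamma_2-\gamma_1$,
\[
x^*(Tx)=a(r_1^p-r_2^p)+|b|\left(r_1^{p-1}r_2\, e^{i(\phi+\theta)}+\epsilon\, r_1 r_2^{p-1}\, e^{-i(\phi+\theta)}\right).
\]
Evaluating at $x=e_1$ and $x=e_2$ gives $a,-a\in W(T)$, so convexity of $W(T)$ would force the real segment $[-|a|,|a|]$ into $W(T)$. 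The heart of the argument is then to exhibit a real $\lambda_0\in(-|a|,|a|)$ that is unattainable as $x^*(Tx)$ for any $(x,x^*)\in\Pi$. For $x$ whose support extends beyond the first two coordinates, the scaling identity $x^*(Tx)=s^p\, y^*(\tilde T y)$ (with $s=(|x_1|^p+|x_2|^p)^{1/p}\in(0,1]$, $y=(x_1,x_2)/s$, and $\tilde T$ the $2\times 2$ block) reduces the analysis to rescaled images of the two-dimensional set $W(\tilde T)$ on $\ell^2_p$, so the computation is essentially two-dimensional. A case analysis on $\epsilon\in\{+1,-1\}$, using the asymmetry $r_1^{p-1}r_2\neq r_1 r_2^{p-1}$ for $r_1\neq r_2$ and $p\neq 2$, then pins down the missing $\lambda_0$.

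The main obstacle I anticipate is this last step: isolating a concrete $\lambda_0$ and ruling it out uniformly over all $(r_1,r_2,\theta,s)$. The real part of the formula above is transcendental in $r_1,r_2$ when $p\neq 2$, so the argument is likely to rest on a sharp inequality or a continuity/intermediate-value observation rather than closed-form root finding. It is also reassuring that the obstruction disappears at $p=2$: the terms $r_1^{p-1}r_2$ and $r_1 r_2^{p-1}$ coincide and the imaginary contribution vanishes, so the Hilbert-space convexity of the numerical range is not contradicted. Once the gap in $W(T)\cap\mathbb{R}$ is established, Proposition \ref{prop1} delivers the non-convexity of $DW(T)$ directly.
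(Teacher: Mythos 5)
Your overall strategy coincides with the paper's: both proofs reduce the statement to the non-convexity of the numerical range $W(T)$ and then invoke Proposition \ref{prop1}. The difference is in how that non-convexity is established. The paper simply cites Mandal et al.\ \cite[Th.~2.9]{MBBP} for the non-convexity of $W(S)$, where $S$ is the $2\times 2$ block, and asserts that ``similar computations'' handle $T=S\oplus 0$. You instead try to prove the non-convexity from scratch, and your preparatory work is correct: conditions (ii) and (iii) do force $c=\pm\bar b$, the $\ell_p$ duality-map formula for $x^*(Tx)$ is right, $\pm a\in W(T)$ (from $e_1,e_2$), and the scaling identity $x^*(Tx)=s^p\,y^*(\tilde Ty)$ is the correct way to account for vectors supported outside the first two coordinates.

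However, the proposal has a genuine gap: the decisive step --- exhibiting a concrete real $\lambda_0\in(-|a|,|a|)$ and proving it is not attained by $x^*(Tx)$ for \emph{any} $(x,x^*)\in\Pi$ --- is explicitly deferred (``the main obstacle I anticipate is this last step'') and never carried out. That step is the entire mathematical content of the theorem beyond Proposition \ref{prop1}; without it you have only shown that \emph{if} $W(T)$ fails to contain some point of the segment $[-|a|,|a|]$, then $DW(T)$ is not convex. Note also that the passage from the $2\times 2$ case to $n\ge 3$ is not automatic: $W(T)$ is the union of the scaled sets $s^pW_{\ell^2_p}(\tilde T)$ over $s\in[0,1]$ together with $\{0\}$, and a union of scalings of a non-convex set can be convex, so the excluded $\lambda_0$ must be ruled out uniformly in $s$ as well --- your framing in terms of a missing real point does address this in principle, but again only in principle. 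To close the argument you would either have to complete the inequality/intermediate-value analysis you sketch, or do what the paper does and import the non-convexity of $W(S)$ from \cite[Th.~2.9]{MBBP}.
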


\begin{proof}
Let $S=\left(\begin{array}{ccc}
    a&b \\
    c&d
	\end{array}\right).$ Mandal et al.  proved in \cite[Th. 2.9]{MBBP} that $W(S)$ is not convex. By similar computations we can show that $W(T)$ is not convex. Therefore, it follows from Proposition \ref{prop1} that $DW(T)$ is not convex.
\end{proof}

\smallskip

\noindent Our next result relates the smoothness of a normed linear space at a point of unit norm with the local convexity properties of the Davis-Wielandt set of operators on the space.

\begin{prop}\label{smothness}
Let $\mathcal{X}$ be a normed linear space and let $x\in S_{\mathcal{X}}.$ Then $x$ is a smooth point of $\mathcal{X}$ if and only if $ DW(T_x)$ is singleton, for every $T\in \mathcal{L}(\mathcal{X})$.
\end{prop}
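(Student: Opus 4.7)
The plan is to prove both directions by directly manipulating the set $\mathcal{J}(x)$ and constructing a suitable rank-one operator when $x$ fails to be smooth.

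For the forward direction, I would argue as follows. If $x$ is a smooth point, then by definition $\mathcal{J}(x)$ is a singleton, say $\mathcal{J}(x) = \{x^{*}\}$. Then for every $T \in \mathcal{L}(\mathcal{X})$ the set
\[
DW(T_x) = \{(x^{*}(Tx), \|Tx\|^2) : x^{*} \in \mathcal{J}(x)\}
\]
trivially collapses to the single point $(x^{*}(Tx), \|Tx\|^2)$. This direction is essentially just unpacking the definition.

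The nontrivial direction is the converse. Suppose $x$ is not a smooth point, so that $\mathcal{J}(x)$ contains two distinct functionals $x^{*}, y^{*}$. My goal is to exhibit a single operator $T \in \mathcal{L}(\mathcal{X})$ for which $DW(T_x)$ is not a singleton, i.e., for which $x^{*}(Tx) \neq y^{*}(Tx)$ (the second coordinate $\|Tx\|^2$ is automatically independent of the choice of functional). Since $x^{*} \neq y^{*}$, there exists some vector $w \in \mathcal{X}$ with $x^{*}(w) \neq y^{*}(w)$. Now I would construct a bounded rank-one operator by setting
\[
T(v) = x^{*}(v)\, w, \qquad v \in \mathcal{X}.
\]
Clearly $T \in \mathcal{L}(\mathcal{X})$, and since $x^{*}(x) = 1$, we have $Tx = w$. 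Therefore $x^{*}(Tx) = x^{*}(w) \neq y^{*}(w) = y^{*}(Tx)$, and both points $(x^{*}(w), \|w\|^2)$ and $(y^{*}(w), \|w\|^2)$ lie in $DW(T_x)$, showing that $DW(T_x)$ is not a singleton.

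The main (and really only mild) obstacle is to make sure such a vector $w$ separating $x^{*}$ from $y^{*}$ exists; this is immediate from the assumption $x^{*} \neq y^{*}$ as functionals, which is precisely what non-smoothness at $x$ guarantees. After that, the rank-one operator $v \mapsto x^{*}(v)\, w$ does all the work, and no deeper machinery (such as Hahn-Banach extension beyond what is already encoded in $\mathcal{J}(x)$) is needed.
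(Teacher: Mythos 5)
Your proposal is correct and follows essentially the same route as the paper: the forward direction is definitional, and the converse produces an operator $T$ with $Tx$ equal to a vector separating two distinct functionals in $\mathcal{J}(x)$. The only difference is that you make the operator explicit as the rank-one map $v \mapsto x^{*}(v)\,w$, whereas the paper merely asserts the existence of some $T$ with $Tx=z$; your version is, if anything, slightly more complete.
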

\begin{proof}
The necessary part follows trivially. Let us prove only the sufficient part. Let $ DW(T_x)$ be singleton, for every $T\in \mathcal{L}(\mathcal{X})$. Suppose on the contrary that $x$ is not a smooth point. Then there exist distinct supporting functionals $x^*, y^*\in \mathcal{J}(x)$. Let $z\in S_{\mathcal{X}}$ be such that $x^*(z)\neq y^*(z).$ Let $T\in \mathcal{L}(\mathcal{X})$ be such that $Tx=z.$ Therefore, $x^*(Tx)\neq y^*(Tx).$ Thus, there is an operator $T\in \mathcal{L}(\mathcal{X})$ such that $ DW(T_x)$ is not singleton, a contradiction. Hence, $x$ is a smooth point.
\end{proof}

\smallskip
 
\noindent In the next result we show that the modified Davis-Wielandt radius $dw^*(.)$ defines a norm on $\mathcal{L}(\mathcal{X}),$ for any normed linear space $\mathcal{X}.$

\begin{theorem}\label{norm}
Let $\mathcal{X}$ be a normed linear space. Then $dw^*(.)$ defines a norm on $\mathcal{L}(\mathcal{X})$. It is equivalent to the usual operator norm $\|.\|$, satisfying the following inequality: $$\|T\|\leq dw^*(T)\leq \sqrt{2}\|T\|,~~\mbox{for every}~~ T\in \mathcal{L}(\mathcal{X}).$$ 
\end{theorem}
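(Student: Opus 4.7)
The proof breaks into two parts: verifying the norm axioms for $dw^*(\cdot)$, and establishing the two-sided estimate against $\|\cdot\|$. My plan is to handle these in that order, and I expect the triangle inequality to be the only step that requires a genuine idea.

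For the norm axioms, positivity and absolute homogeneity are straightforward. Non-degeneracy uses the observation that, for each $x \in S_{\mathcal{X}}$, the Hahn--Banach theorem guarantees the existence of some $x^* \in \mathcal{J}(x)$, so that $\|Tx\|$ appears as the second coordinate of a point of $DW^*(T)$; hence $dw^*(T) = 0$ forces $Tx = 0$ for every $x \in S_{\mathcal{X}}$, i.e.\ $T = 0$. Homogeneity follows by pulling $|\alpha|$ out of both coordinates inside the square root.

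The triangle inequality is the key step. For each $(x, x^*) \in \Pi$, the subadditivity of the modulus and of the norm give $|x^*((T+S)x)| \le |x^*(Tx)| + |x^*(Sx)|$ and $\|(T+S)x\| \le \|Tx\| + \|Sx\|$. I then apply the Minkowski inequality on the Euclidean plane, namely $\sqrt{(a_1+a_2)^2 + (b_1+b_2)^2} \le \sqrt{a_1^2+b_1^2} + \sqrt{a_2^2+b_2^2}$ for non-negative reals, with $a_i = |x^*(T_ix)|$ and $b_i = \|T_ix\|$. Combining, $\sqrt{|x^*((T+S)x)|^2 + \|(T+S)x\|^2} \le dw^*(T) + dw^*(S)$, and taking the supremum over $(x,x^*) \in \Pi$ yields the claim.

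For the equivalence with the operator norm, I use $\sqrt{u^2 + v^2} \ge v$ to get $dw^*(T) \ge \sup_{(x,x^*) \in \Pi} \|Tx\| = \sup_{x \in S_{\mathcal{X}}} \|Tx\| = \|T\|$, where again the existence of $x^* \in \mathcal{J}(x)$ from Hahn--Banach is invoked so that no admissible $x \in S_{\mathcal{X}}$ is lost when restricting to $\Pi$. For the upper bound I note $|x^*(Tx)| \le \|x^*\|\|Tx\| = \|Tx\| \le \|T\|$ and $\|Tx\| \le \|T\|$, so $\sqrt{|x^*(Tx)|^2 + \|Tx\|^2} \le \sqrt{2}\,\|T\|$, and taking the supremum finishes the proof. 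The only moderately subtle point in the whole argument is recognising the triangle inequality as Minkowski on $\mathbb{R}^2$; everything else is direct estimation.
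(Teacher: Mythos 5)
Your proof is correct and follows essentially the same route as the paper's: the triangle inequality via subadditivity of the modulus and norm followed by the Minkowski inequality on $\mathbb{R}^2$, and the two-sided estimate by direct pointwise bounds. Your explicit appeal to Hahn--Banach for non-degeneracy and for recovering $\sup_{x\in S_{\mathcal{X}}}\|Tx\|=\|T\|$ over $\Pi$ is a small point of added care that the paper leaves implicit.
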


\begin{proof}
First we prove that $dw^*(.)$ defines a norm on $\mathcal{L}(\mathcal{X})$. Let $T\in \mathcal{L}(\mathcal{X}).$ Clearly,
 $dw^*(T)=0$ if and only if $T=0$ and $dw^*(cT)=|c|dw^*(T)$, for all scalars $c$. Let $T_1, T_2\in \mathcal{L}(\mathcal{X}).$ Then
\begin{eqnarray*}
&& dw^*(T_1+T_2)\\
 = && \sup \left \{\sqrt{ |x^*((T_1+T_2)x)|^2+\|(T_1+T_2)x\|^2 }: (x,x^*)\in \Pi  \right \}\\
  \leq &&  \sup \left \{\sqrt{ |x^*(T_1x)+x^*(T_2x)|^2+(\|T_1x\|+\|T_2x\|)^2 }: (x,x^*)\in \Pi  \right \}\\
	\leq &&  \sup \Big \{ \sqrt{ |x^*(T_1x)|^2+\|T_1x\|^2 }+\sqrt{|x^*(T_2x)|^2+\|T_2x\|^2}: (x,x^*)\in \Pi  \Big \},\\
		 &&  \hspace{6cm}\mbox{by Minkowski inequality}\\
		\leq &&  \sup \left \{ \sqrt{ |x^*(T_1x)|^2+\|T_1x\|^2 }: (x,x^*)\in \Pi  \right \} \\
	&&  + \sup \left \{ \sqrt{ |x^*(T_2x)|^2+\|T_2x\|^2 }: (x,x^*)\in \Pi  \right \}\\
		=&& dw^*(T_1)+dw^*(T_2).
\end{eqnarray*}
Therefore, $dw^*(T_1+T_2)\leq dw^*(T_1)+dw^*(T_2)$ for every $T_1, T_2\in \mathcal{L}(\mathcal{X}).$ Hence, $dw^*(.)$ defines a norm on $\mathcal{L}(\mathcal{X}).$ Next we prove the other part of the theorem. It follows from the definition of the modified Davis-Wielandt radius that
\begin{eqnarray*}
dw^*(T)&=&\sup \left\{ \sqrt{| x^*(Tx) |^2+\|Tx\|^2}~:(x,x^*)\in \Pi \right\}\\
&\leq& \sqrt{  w^2(T)+ \|T\|^2   }\leq \sqrt{  \|T\|^2 + \|T\|^2 }\leq \sqrt{2} \|T\|.
\end{eqnarray*}
Also,  
\begin{eqnarray*}
dw^*(T)&=&\sup \left\{ \sqrt{| x^*(Tx) |^2+\|Tx\|^2}~:(x,x^*)\in \Pi \right\}\\
&\geq& \max \left \{ w(T), \|T\|   \right \}=\|T\|.
\end{eqnarray*}
This completes the proof.

\end{proof}

\begin{remark}
We would like to remark that the inequality in Theorem \ref{norm} is sharp. For the identity operator $T$ on $\mathcal{X}$, the second inequality becomes an equality. For a skew-symmetric operator $T$ on a real Hilbert space $\mathcal{H}$, the first inequality becomes an equality. 
\end{remark}

\noindent We would like to estimate the Davis-Wielandt radius of an arbitrary operator on a given polyhedral space. First we recall the following lemma from \cite{SPBB}, which is useful in this context.

\begin{lemma} \label{lemma:extreme}
Let $\mathcal{X}$ be a polyhedral Banach space. Then $f \in S_{\mathcal{X}^{*}}$ is an extreme point of $ B_{\mathcal{X}^{*}} $ if and only if $ f $ is a supporting functional corresponding to a facet of $B_{\mathcal{X}}.$  
\end{lemma}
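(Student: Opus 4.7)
My plan is to prove both directions using the duality between the polytopes $B_{\mathcal{X}}$ and $B_{\mathcal{X}^*}$. Set $n=\dim\mathcal{X}$ and let $V=\{v_1,\ldots,v_N\}$ denote the finite vertex set of $B_{\mathcal{X}}$; by symmetry and Krein--Milman, $B_{\mathcal{X}^*}=\{g\in\mathcal{X}^*:g(v_i)\le 1\text{ for all }i\}$ is itself a polytope.

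For the ``if'' direction, let $f$ be a supporting functional for a facet $F$ with witness $v\in F$, so $F=(v+\ker f)\cap S_{\mathcal{X}}$. Suppose $f=\tfrac12(g+h)$ with $g,h\in B_{\mathcal{X}^*}$. For every $x\in F$, $f(x)=1$ while $g(x),h(x)\le 1$, so averaging gives $g(x)=h(x)=1$ throughout $F$. Since $F$ is a facet its affine hull is the whole hyperplane $v+\ker f$ of dimension $n-1$, so $g-f$ is a linear functional that vanishes on the $(n-1)$-dimensional subspace $\ker f$ and also at $v$. A linear functional is determined up to a scalar by its kernel, hence $g-f=\lambda f$; and $(g-f)(v)=0=\lambda f(v)$ then forces $\lambda=0$, giving $g=f$, and similarly $h=f$. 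Thus $f\in\mathrm{ext}\,B_{\mathcal{X}^*}$.

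For the ``only if'' direction, suppose $f\in\mathrm{ext}\,B_{\mathcal{X}^*}$. As an extreme point (hence vertex) of the polytope $B_{\mathcal{X}^*}\subseteq\mathcal{X}^*$, $f$ is the unique solution of $n$ linearly independent active constraints among its defining inequalities; this yields $n$ linearly independent vertices $v_1,\ldots,v_n\in V$ with $f(v_i)=1$ for all $i$. The differences $v_2-v_1,\ldots,v_n-v_1$ then lie in $\ker f$ and remain linearly independent, so they span $\ker f$; hence $v_1,\ldots,v_n$ are affinely independent and their affine hull is the full hyperplane $v_1+\ker f$. Setting $F=(v_1+\ker f)\cap B_{\mathcal{X}}$, every $x\in F$ satisfies $\|x\|\ge f(x)=1$ and $\|x\|\le 1$, so $\|x\|=1$ and $F=(v_1+\ker f)\cap S_{\mathcal{X}}$. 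This $F$ is a face of $B_{\mathcal{X}}$ containing the $n$ affinely independent points $v_1,\ldots,v_n$, so $\dim F=n-1$ and $F$ is a facet; since $f$ attains its norm at $v_1\in F$, $f$ is the supporting functional of $F$.

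The main obstacle is the LP-style vertex characterisation used in the converse, namely that an extreme point of a polytope in $n$-space is the unique solution of $n$ independent active linear inequalities. I would either invoke this as a standard polytope fact or prove it directly: if at most $n-1$ independent constraints were active at $f$, the common null space of the corresponding evaluations would contain a nonzero direction, and one could perturb $f$ bidirectionally inside $B_{\mathcal{X}^*}$, contradicting extremality. A lesser subtlety in the ``if'' direction is the statement that a linear functional is determined up to a scalar by its kernel; this is a one-line linear algebra fact but worth stating explicitly since it is what converts ``$g=f$ on a hyperplane'' into ``$g=f$ on $\mathcal{X}$''.
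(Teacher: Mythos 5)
Your proof is correct. There is nothing in this paper to compare it against: the lemma is recalled from \cite{SPBB} without proof, so the authors supply no argument of their own. Your two-directional polytope-duality argument --- that any convex decomposition of $f$ must agree on the affine hull of the facet, which is the full hyperplane $v+\ker f$, and conversely that an extreme point of the polytope $B_{\mathcal{X}^*}=\{g: g(v_i)\le 1\}$ must have $n$ linearly independent active constraints whose corresponding vertices affinely span the supporting hyperplane of a facet --- is the standard proof of this fact and is essentially the route taken in the cited source. Both directions are complete; in particular you correctly identify and dispose of the only two nonobvious steps (the LP-style active-constraint characterisation of vertices, via the bidirectional perturbation $f\pm\epsilon\psi$ with $\psi$ annihilating the active vertices, and the fact that a functional vanishing on $\ker f$ is a scalar multiple of $f$).
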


\smallskip

\noindent Now we are in a position to prove the following theorem.

\begin{theorem}\label{prop-est}
Let $\mathcal{X}$ be an $n$-dimensional polyhedral Banach space and let $T\in \mathcal{L}(\mathcal{X}).$ Then
\[dw(T)\leq \max \left\{ \sqrt{| x^*(Tx) |^2+\|T\|^4}~:(x,x^*)\in G \right\}, \] 
where $G=\{(x,x^*): x\in \textit{ext}~B_{\mathcal{X}}, x^*\in \textit{ext}~B_{\mathcal{X}^*}, x^*(x)=1 \}.$ 
\end{theorem}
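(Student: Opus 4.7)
The plan is to reduce the supremum defining $dw(T)$ over the typically infinite set $\Pi$ to a maximum over the finite set $G$ via two separate convex-combination arguments, one on the functional side and one on the vector side, together with the crude bound $\|Tx\|^4 \leq \|T\|^4$ that accounts for the replacement of $\|Tx\|^4$ by $\|T\|^4$ on the right-hand side.

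For the functional-side reduction, I would fix $(x, x^*) \in \Pi$ and write $x^* = \sum_{j} \mu_j f_j$ as a convex combination of extreme points $f_j \in \textit{ext}~B_{\mathcal{X}^*}$, which is possible since $B_{\mathcal{X}^*}$ is a polytope in finite dimensions. Evaluating at $x$ gives $1 = x^*(x) = \sum_j \mu_j f_j(x)$, and since each $f_j(x)$ lies in the closed unit disk of the scalar field and $1$ is an extreme point thereof, every $j$ with $\mu_j > 0$ must satisfy $f_j(x) = 1$. The triangle inequality then yields $|x^*(Tx)| \leq \max_j |f_j(Tx)|$, so I can pick $f \in \textit{ext}~B_{\mathcal{X}^*}$ with $f(x) = 1$ and $|f(Tx)| \geq |x^*(Tx)|$. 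For the vector-side reduction, Lemma \ref{lemma:extreme} identifies this $f$ as the supporting functional of a facet $F_f := \{y \in B_\mathcal{X} : f(y) = 1\}$ of $B_\mathcal{X}$ that contains $x$. The map $y \mapsto |f(Ty)|$ is convex (as the modulus of a linear functional), so its maximum over the compact convex set $F_f$ is attained at an extreme point of $F_f$; since $F_f$ is a face of $B_\mathcal{X}$, any such extreme point lies in $\textit{ext}~B_\mathcal{X}$, producing $y \in \textit{ext}~B_\mathcal{X}$ with $(y, f) \in G$ and $|f(Ty)| \geq |f(Tx)|$.

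Combining these two estimates with $\|Tx\| \leq \|T\|$ gives
\[
\sqrt{|x^*(Tx)|^2 + \|Tx\|^4} \leq \sqrt{|f(Ty)|^2 + \|T\|^4}
\]
for some $(y, f) \in G$ depending on $(x, x^*)$, and taking the supremum over $(x, x^*) \in \Pi$ delivers the claimed bound. I do not anticipate a serious obstacle: the two reduction arguments are routine convexity, and the mild technical points, namely that each $f_j$ with $\mu_j > 0$ in the decomposition of $x^*$ must fix $x$, and that extreme points of a face of $B_\mathcal{X}$ are themselves extreme points of $B_\mathcal{X}$, are both standard.
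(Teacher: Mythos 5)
Your proposal is correct, and its second half is essentially the paper's argument: the paper also bounds $\|Tx\|^4$ by $\|T\|^4$, picks a facet $F$ of $B_{\mathcal{X}}$ containing $x$ with supporting functional $f$ (extreme in $B_{\mathcal{X}^*}$ by Lemma \ref{lemma:extreme}), writes $x=\sum_i\lambda_i x_i$ with $x_i\in \textit{ext}~B_{\mathcal{X}}\cap F$, and gets $|f(Tx)|\leq\max_i|f(Tx_i)|$ — which is exactly your Bauer-maximum-principle step made explicit. Where you genuinely diverge is the preliminary functional-side reduction: the paper simply replaces an arbitrary $(x,x^*)\in\Pi$ by the pair $(x,f)$ with $f$ the facet functional, and never addresses what happens when $x$ is a non-smooth point, where $\mathcal{J}(x)$ contains functionals other than $f$; as written, the paper's chain of inequalities only covers those $x^*$ that are themselves facet functionals. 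Your decomposition $x^*=\sum_j\mu_j f_j$ into extreme functionals, together with the observation that every $f_j$ carrying positive weight must satisfy $f_j(x)=1$ (since $1$ is extreme in the unit disk of the scalar field), closes exactly this gap and yields the bound $|x^*(Tx)|\leq\max_j|f_j(Tx)|$ over admissible extreme functionals. So your route is slightly longer but is actually the complete argument; the paper's version buys brevity at the cost of silently assuming $x^*$ is the supporting functional of a facet through $x$.
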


\begin{proof}
It follows from the definition of the Davis-Wielandt radius of $T$ that
\begin{eqnarray*}
dw(T)&= & \sup \left\{ \sqrt{| x^*(Tx) |^2+\|Tx\|^4}~:(x,x^*)\in \Pi \right\}\\
&\leq & \sup \left\{ \sqrt{| x^*(Tx) |^2+\|T\|^4}~:(x,x^*)\in \Pi \right\}.
\end{eqnarray*}
Let $x\in S_{\mathcal{X}}.$ Let $F$ be a facet of $B_{\mathcal{X}}$ containing $x$. Let $\textit{ext}~B_{\mathcal{X}} \bigcap F =\{x_1,x_2,\ldots,x_m\}$. Then there exist scalars $\lambda_1,\lambda_2,\ldots,\lambda_m \geq 0$ with $\sum_{i=1}^m\lambda_i =1$ such that  $x= \sum_{i=1}^m\lambda_ix_i.$ Let $x^*$ be the supporting functional corresponding to the facet $F$. Then by Lemma \ref{lemma:extreme}, we have $x^*\in \textit{ext}~B_{\mathcal{X}^*}$ and  $x^*\in \mathcal{J}(x_i)$ for all $i=1,2,\ldots,m.$ Now,
\begin{eqnarray*}
|x^*(Tx)|&=&\left |\sum_{i=1}^m\lambda_i x^*(Tx_i) \right |\\
&\leq& \sum_{i=1}^m\lambda_i |x^*(Tx_i)|\\
&\leq& \max_{1\leq i\leq m} \left \{ |x^*(Tx_i)|\right\}.
\end{eqnarray*} 
Thus, $|x^*(Tx)|^2+\|T\|^4\leq \max\left \{ |x^*(Tx_i)|^2: 1\leq i\leq m \right\}+\|T\|^4.$ This implies that 
\begin{eqnarray*}
\sup \left\{ \sqrt{| x^*(Tx) |^2+\|T\|^4}~:(x,x^*)\in \Pi \right\}=\max \left\{ \sqrt{| x^*(Tx) |^2+\|T\|^4}~:(x,x^*)\in G \right\}.
\end{eqnarray*}
This completes the proof. 
\end{proof}

\section{On the Davis-Wielandt index}\label{sec3}

\smallskip 

\noindent We begin this section with the following inequalities for the Davis-Wielandt index and the numerical index of a normed linear space $\mathcal{X}$. 

\begin{prop}\label{prop-bound eta}
Let $\mathcal{X}$ be a normed linear space. Then $$0\leq n(\mathcal{X})\leq 1\leq \eta_{dw}(\mathcal{X})\leq \sqrt{n^2(\mathcal{X})+1} \leq \sqrt{2}.$$ 
\end{prop}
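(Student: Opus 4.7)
The plan is to verify the five inequalities in order, most of which follow directly from the definitions once one notices the right pointwise estimate for $dw(T)$.

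First I would dispense with the three easy inequalities. The bound $0\leq n(\mathcal{X})$ is immediate since the numerical radius is a supremum of moduli. The bound $n(\mathcal{X})\leq 1$ is standard: for every $(x,x^*)\in\Pi$ and every $T\in S_{\mathcal{L}(\mathcal{X})}$, $|x^*(Tx)|\leq \|x^*\|\,\|Tx\|\leq \|T\|=1$, so $w(T)\leq 1$ and taking the infimum gives $n(\mathcal{X})\leq 1$. The final inequality $\sqrt{n^2(\mathcal{X})+1}\leq\sqrt{2}$ is a direct consequence of $n(\mathcal{X})\leq 1$.

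The interesting inequalities are $1\leq \eta_{dw}(\mathcal{X})$ and $\eta_{dw}(\mathcal{X})\leq \sqrt{n^2(\mathcal{X})+1}$. For the former, fix any $T\in S_{\mathcal{L}(\mathcal{X})}$. For each $(x,x^*)\in\Pi$, one has
\[
\sqrt{|x^*(Tx)|^2+\|Tx\|^4}\;\geq\;\|Tx\|^2,
\]
so taking the supremum over $\Pi$ yields $dw(T)\geq \sup_{x\in S_{\mathcal{X}}}\|Tx\|^2=\|T\|^2=1$. Passing to the infimum over $T\in S_{\mathcal{L}(\mathcal{X})}$ gives $\eta_{dw}(\mathcal{X})\geq 1$.

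For the upper bound, I would use the obvious pointwise estimate
\[
\sqrt{|x^*(Tx)|^2+\|Tx\|^4}\;\leq\;\sqrt{w^2(T)+\|T\|^4},
\]
valid for every $(x,x^*)\in\Pi$, which after taking the supremum gives $dw(T)\leq \sqrt{w^2(T)+\|T\|^4}=\sqrt{w^2(T)+1}$ for $T\in S_{\mathcal{L}(\mathcal{X})}$. Taking the infimum over $T\in S_{\mathcal{L}(\mathcal{X})}$ on both sides, and using that $s\mapsto\sqrt{s^2+1}$ is continuous and strictly increasing on $[0,\infty)$ so that the infimum passes inside, one obtains
\[
\eta_{dw}(\mathcal{X})\;\leq\;\inf_{T\in S_{\mathcal{L}(\mathcal{X})}}\sqrt{w^2(T)+1}\;=\;\sqrt{n^2(\mathcal{X})+1}.
\]
There is no real obstacle here; the only point that needs a line of justification is that the monotonicity of $s\mapsto\sqrt{s^2+1}$ legitimately lets us pull the infimum inside. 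Chaining the five inequalities then produces the claim.
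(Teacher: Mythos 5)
Your proposal is correct and follows essentially the same route as the paper: the first three inequalities are dispatched as immediate, and the key bound $\eta_{dw}(\mathcal{X})\leq\sqrt{n^2(\mathcal{X})+1}$ is obtained from the same pointwise estimate $dw(T)\leq\sqrt{w^2(T)+1}$ for $T\in S_{\mathcal{L}(\mathcal{X})}$ followed by taking the infimum. Your extra remarks (e.g.\ justifying $dw(T)\geq\|T\|^2=1$ and the monotonicity argument for passing the infimum inside the square root) simply make explicit the steps the paper labels as trivial.
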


\begin{proof}
The inequalities $0\leq n(\mathcal{X})\leq 1\leq \eta_{dw}(\mathcal{X})$ and $\sqrt{n^2(\mathcal{X})+1} \leq \sqrt{2}$ follows trivially. We only prove that $\eta_{dw}(\mathcal{X})\leq \sqrt{n^2(\mathcal{X})+1}.$ Let $T\in S_{\mathcal{L}(\mathcal{X})}.$ Then it follows from the definition of the Davis-Wielandt radius of $T$ that
\begin{eqnarray*}
dw(T)=\sup \left\{ \sqrt{| x^*(Tx) |^2+\|Tx\|^4}~:(x,x^*)\in \Pi \right\} \leq\sqrt{  w^2(T)+ 1 }.
\end{eqnarray*}
Taking infimum over all $T\in S_{\mathcal{L}(\mathcal{X})}$, we get $\eta_{dw}(\mathcal{X})\leq \sqrt{n^2(\mathcal{X})+1}.$
\end{proof}

\smallskip

\begin{remark}\label{remneq}
$(i)$ For the inequalities $ 0\leq n(\mathcal{X})\leq 1$ to be equalities (in separate cases), we refer the readers to the study of normed spaces with numerical index $0$ \cite{MMP1,MMP2} and normed spaces with numerical index $1$ \cite{M}, respectively.\\

\noindent $(ii)$ It follows from Proposition \ref{prop-bound eta} that if $n(\mathcal{X})=0$ then $\eta_{dw}(\mathcal{X})=1.$ However, $\eta_{dw}(\mathcal{X})=1$ does not always imply $n(\mathcal{X})=0$. As for example, we consider $\mathcal{H}$ to be the $n$-dimensional complex Hilbert space, where $n\geq 2$. Let $T=\left(\begin{array}{cc}
    0&1 \\
    0&0
	\end{array}\right) \bigoplus 0$, be the usual matrix representation of $T\in \mathcal{L}(\mathcal{H})$.  It is easy to see that $\|T\|=dw(T)=1$. Since $ \eta_{dw}(\mathcal{H})\geq 1$, it is now immediate that $\eta_{dw}(\mathcal{H})=1$. On the other hand, it is well known that $n(\mathcal{H})=\frac{1}{2}.$
\end{remark}

\noindent In our next result we prove that the following equivalent conditions for the Davis-Wielandt index of a finite-dimensional Banach space to be $\sqrt{2}$.

\begin{prop}\label{equivalent}
Let $\mathcal{X}$ be a finite-dimensional Banach space. Then the following conditions are equivalent:\\
$(i)$ $\eta_{dw}(\mathcal{X})=\sqrt{2}$.\\
$(ii)$ $n(\mathcal{X})=1.$\\
$(iii)$ $w(T)=\|T\|$ for all $T\in {\mathcal{L}(\mathcal{X})}.$\\
$(iv)$ $|x^*(x)|=1$ for every $x\in \textit{ext}~ B_{\mathcal{X}}$ and for every $x^*\in \textit{ext}~ B_{\mathcal{X}^*}.$
\end{prop}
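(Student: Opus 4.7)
The plan is to prove the four-way equivalence by using Proposition \ref{prop-bound eta} as the bridge between the Davis-Wielandt and numerical worlds, and by invoking the classical characterization of numerical index one from \cite{M} for the combinatorial condition $(iv)$. I would first dispatch $(ii) \iff (iii)$ by unpacking the definition of $n(\mathcal{X})$: since $w(T) \le \|T\|$ for every $T$, one always has $n(\mathcal{X}) \le 1$, and equality is equivalent to $w(T) = 1$ whenever $\|T\| = 1$, which by homogeneity is exactly $(iii)$.

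Next I would handle $(i) \iff (ii)$. Proposition \ref{prop-bound eta} immediately yields $\eta_{dw}(\mathcal{X}) \le \sqrt{n^2(\mathcal{X}) + 1} = \sqrt{2}$ under $(ii)$; for the matching lower bound I would use finite-dimensional compactness of $\Pi$: given $T \in S_{\mathcal{L}(\mathcal{X})}$, condition $(iii)$ combined with compactness produces $(x, x^*) \in \Pi$ with $|x^*(Tx)| = 1$, and the sandwich $1 = |x^*(Tx)| \le \|Tx\| \le \|T\| = 1$ forces $\|Tx\| = 1$, so $dw(T) \ge \sqrt{1^2 + 1^4} = \sqrt{2}$, whence $\eta_{dw}(\mathcal{X}) \ge \sqrt{2}$. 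Conversely, for every $T \in S_{\mathcal{L}(\mathcal{X})}$ the estimate $dw(T) \le \sqrt{w^2(T) + \|T\|^4} = \sqrt{w^2(T) + 1}$ together with $(i)$ forces $w(T) \ge 1$, so $n(\mathcal{X}) \ge 1$, and $n(\mathcal{X}) = 1$ follows.

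The remaining equivalence $(iii) \iff (iv)$ is the classical McGregor characterization, cited as \cite{M}. For the easier direction $(iv) \Rightarrow (iii)$, given $T \in \mathcal{L}(\mathcal{X})$, convexity of $y \mapsto \|Ty\|$ together with compactness of $B_{\mathcal{X}}$ delivers $x_0 \in \textit{ext}~B_{\mathcal{X}}$ with $\|Tx_0\| = \|T\|$; Krein-Milman applied to the nonempty compact convex face $\{y^* \in B_{\mathcal{X}^*} : y^*(Tx_0) = \|T\|\}$ produces an extreme support functional $y_0^* \in \textit{ext}~B_{\mathcal{X}^*}$; hypothesis $(iv)$ gives $y_0^*(x_0) = \alpha$ with $|\alpha| = 1$, so $(\bar{\alpha} x_0, y_0^*) \in \Pi$ and $|y_0^*(T \bar{\alpha} x_0)| = \|T\|$, yielding $w(T) = \|T\|$. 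The main obstacle is the reverse direction $(iii) \Rightarrow (iv)$: the naive approach via the rank-one operator $T(y) = z_0^*(y) x_0$ only produces some $y_0 \in S_{\mathcal{X}}$ with $|z_0^*(y_0)| = 1$ and a support functional $y_0^* \in \mathcal{J}(y_0)$ satisfying $|y_0^*(x_0)| = 1$, which is strictly weaker than the desired conclusion $|z_0^*(x_0)| = 1$; a genuine proof requires McGregor's more delicate variational argument combined with an extreme-point decomposition of arbitrary supporting functionals, and I would defer to \cite{M} for this implication.
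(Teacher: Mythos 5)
Your proposal is correct and follows essentially the same route as the paper: the upper bound comes from Proposition \ref{prop-bound eta}, the lower bound from attainment of the numerical radius in finite dimensions, and the link to condition $(iv)$ is deferred to McGregor \cite{M}. One small streamlining worth noting: where the paper invokes \cite[Th. 3.12]{SMBP} to produce a point of $W_T \cap M_T$, your sandwich $1 = |x^*(Tx)| \le \|Tx\| \le \|T\| = 1$ extracts the needed norm-attainment directly from $w(T)=\|T\|$ at a numerical-radius-attaining pair, making that step self-contained.
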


\begin{proof}
\noindent The equivalence of $(ii)$ and $(iv)$ was proved by McGregor in \cite[Th. 3.1]{M}. The equivalence of $(ii) $ and $(iii) $ follows trivially. We only prove the equivalence of $(i) $ and $(iii) $.\\

\noindent $(i) \implies (iii)$: Let $\eta_{dw}(\mathcal{X})=\sqrt{2}$. Then $dw(T)\geq \sqrt{2}$ for all $T\in S_{\mathcal{L}(\mathcal{X})}.$ Therefore, $\sqrt{2} \leq dw(T)\leq \sqrt{w^2(T)+\|T\|^4} $ $\leq \sqrt{2}$ for all $T\in S_{\mathcal{L}(\mathcal{X})}.$ This implies that $w(T)=1$ when $\|T\|=1$. Hence, $w(T)=\|T\|$ for all $T\in {\mathcal{L}(\mathcal{X})}.$\\

\noindent $(iii) \implies (i)$: Let $w(T)=\|T\|$ for all $T\in {\mathcal{L}(\mathcal{X})}.$
Since $\mathcal{X}$ is finite-dimensional, $W_T\neq \emptyset$ for all $T\in {\mathcal{L}(\mathcal{X})}.$ Therefore, it follows from \cite[Th. 3.12]{SMBP} that $W_T \bigcap M_T \neq \emptyset.$ Let $x_0\in W_T \bigcap M_T.$ So, if $\|T\|=1$ then there exist $x^*_0\in \mathcal{J}(x_0)$ such that $w(T)=|x^*_0(Tx_0)|=1$.  Therefore, $dw(T)\geq \sqrt{|x_0^*(Tx_0)|^2+\|Tx_0\|^4}=\sqrt{2}$.  Thus, $dw(T)\geq \sqrt{2}$ for all $T\in S_{\mathcal{L}(\mathcal{X})}$. Hence, $\eta_{dw}(\mathcal{X})\geq \sqrt{2}.$ Also, it follows from Proposition \ref{prop-bound eta} that $\eta_{dw}(\mathcal{X})\leq \sqrt{2}.$  Therefore, $\eta_{dw}(\mathcal{X})=\sqrt{2}$.

\end{proof}

\begin{remark}
The equivalence of  $(i)$, $(ii)$ and $(iii)$ hold true even if $\mathcal{X}$ is infinite-dimensional. The implication $(iv) \implies (ii)$ may not hold true if $\mathcal{X}$ is infinite-dimensional. As for example, we note from \cite{VMR}  that  $\textit{ext}~B_{c_0(\ell_2)}=\emptyset$ but $n(c_0(\ell_2))<1.$
\end{remark}

\smallskip

\noindent For the inequality $\eta_{dw}(\mathcal{X})\leq \sqrt{n^2(\mathcal{X})+1}$ to be an equality, we note that a sufficient condition is $n(\mathcal{X})\in \{0,1\}$. However, there exist polyhedral Banach spaces $\mathcal{X}$, for which $n(\mathcal{X})\neq 0,1$, but the above inequality turns out to be an equality. Before discussing such examples, we refer the readers to \cite{SPBB}, where explicit computation of the exact numerical index of certain $3$-dimensional Banach spaces was done. Since the computation of the Davis-Wielandt index resembles the computation of the numerical index to be considerable extent, we will not repeat the calculations. However, to show that  $\eta_{dw}(\mathcal{X})= \sqrt{n^2(\mathcal{X})+1}$ for the following spaces, denoted in each case by $\mathcal{X}$, we follow the same strategy:\\

\noindent $(1)$ The value of $n(\mathcal{X})$ is known from \cite{SPBB}. We show that ${dw}(T)\geq \sqrt{n^2(\mathcal{X})+1},$ for each $T\in S_{\mathcal{L}(\mathcal{X})}.$ In particular, this implies that $\eta_{dw}(\mathcal{X})\geq \sqrt{n^2(\mathcal{X})+1}$.\\
$(2)$ We explicitly demonstrate an operator $T_0\in S_{\mathcal{L}(\mathcal{X})}$ for which ${dw}(T_0)= \sqrt{n^2(\mathcal{X})+1}.$  This combined with $(1)$ gives the desired equality.\\ 
\noindent In each case, the operator $T_0$ is chosen such that $w(T_0)=n(\mathcal{X})$, as given in \cite{SPBB}.

\smallskip
We state the following theorems that determine the exact value of  the Davis-Wielandt index of some special $3$-dimensional polyhedral Banach spaces, the proof of which is clear from   the above observations coupled with the results obtained in \cite{SPBB}.

\begin{theorem}\label{theorem:pyramid}

Let $\mathcal{X}$  be a $3$-dimensional polyhedral Banach space such that $B_{\mathcal{X}}$ is a polyhedron obtained by gluing two pyramids at the opposite base faces of a right prism having square base, with vertices $ \pm(1,1,1), \pm(-1,1,1), $   $ \pm(-1,-1,1), $ $ \pm(1,-1,1), \pm(0,0,2)$.  Then $$\eta_{dw}(\mathcal{X})= \frac{\sqrt{5}}{2}.$$
\end{theorem}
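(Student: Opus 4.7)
The plan is to follow the two-part strategy laid out by the authors in the paragraph preceding the theorem: take as given from \cite{SPBB} that $n(\mathcal{X})=\tfrac{1}{2}$, then (i) establish $dw(T)\ge\tfrac{\sqrt{5}}{2}$ for every $T\in S_{\mathcal{L}(\mathcal{X})}$, and (ii) exhibit a concrete $T_{0}\in S_{\mathcal{L}(\mathcal{X})}$ with $dw(T_{0})=\tfrac{\sqrt{5}}{2}$. Since $\tfrac{\sqrt{5}}{2}=\sqrt{n(\mathcal{X})^{2}+1}$, Proposition \ref{prop-bound eta} also supplies the upper bound directly, though it is not strictly required.

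The preparatory step is to describe the geometry of $B_{\mathcal{X}}$. The polyhedron has twelve facets: four lateral squares and eight triangular pyramid faces. By Lemma \ref{lemma:extreme}, the extreme points of $B_{\mathcal{X}^{*}}$ are exactly the twelve corresponding supporting functionals $\pm e_{1}^{*},\pm e_{2}^{*}$ and $(\pm e_{i}^{*}\pm e_{3}^{*})/2$ for $i\in\{1,2\}$, giving the norm
\[
\|(a,b,c)\|=\max\bigl\{|a|,\;|b|,\;(|a|+|c|)/2,\;(|b|+|c|)/2\bigr\}.
\]
For each extreme point $v$ of $B_{\mathcal{X}}$, the set $\mathcal{J}(v)$ is the convex hull of those extreme functionals that pair with $v$ to $1$: at $v=(0,0,2)$ these are the four upper-pyramid functionals $(\pm e_{i}^{*}+e_{3}^{*})/2$, and at $v=(1,1,1)$ they are $e_{1}^{*},e_{2}^{*},(e_{1}^{*}+e_{3}^{*})/2,(e_{2}^{*}+e_{3}^{*})/2$.

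For (i), fix $T\in S_{\mathcal{L}(\mathcal{X})}$. Finite-dimensionality together with convexity of $x\mapsto\|Tx\|$ forces $\|T\|$ to be attained at some extreme point $v$ of $B_{\mathcal{X}}$, whence $Tv\in S_{\mathcal{X}}$; write $Tv=(a,b,c)$. The symmetry group of $B_{\mathcal{X}}$ reduces the analysis to $v\in\{(0,0,2),(1,1,1)\}$. In each case I would show that at least one extreme supporting functional $f\in\mathcal{J}(v)$ satisfies $|f(Tv)|\ge\tfrac{1}{2}$: for $v=(0,0,2)$ the maximum over the four candidates equals $\bigl(|c|+\max(|a|,|b|)\bigr)/2$, and a four-way split according to which of the four summands in $\|(a,b,c)\|$ equals $1$ makes this $\ge\tfrac{1}{2}$ in every subcase; for $v=(1,1,1)$ the parallel split works, where the non-immediate subcase uses that $|a|<\tfrac{1}{2}$ together with $|a|+|c|=2$ forces $|c|>\tfrac{3}{2}$ and hence $|a+c|/2\ge\tfrac{1}{2}$. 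Then $(v,f)\in\Pi$ yields $dw(T)\ge\sqrt{|f(Tv)|^{2}+\|Tv\|^{4}}\ge\tfrac{\sqrt{5}}{2}$.

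For (ii), take $T_{0}(x,y,z)=(z/2,0,0)$. Direct evaluation at the ten extreme points gives $\|T_{0}\|=1$, attained precisely at $\pm(0,0,2)$, and computing $|f(T_{0}v)|$ at each pair $(v,f)\in G$ (in the notation of Theorem \ref{prop-est}) confirms $w(T_{0})=\tfrac{1}{2}$, so $dw(T_{0})\le\sqrt{w(T_{0})^{2}+\|T_{0}\|^{4}}=\tfrac{\sqrt{5}}{2}$. The single pair $(x,x^{*})=\bigl((0,0,2),(e_{1}^{*}+e_{3}^{*})/2\bigr)\in\Pi$ already delivers the matching lower bound $dw(T_{0})\ge\sqrt{(1/2)^{2}+1}=\tfrac{\sqrt{5}}{2}$. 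The main obstacle, as is typical for such concrete polyhedral computations, is not any one step but the bookkeeping of step (i): verifying that the symmetry reduction genuinely covers every vertex class, and that every norm-attainment subcase for $Tv$ is handled by at least one of the four candidate extreme supporting functionals at $v$.
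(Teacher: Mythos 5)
Your proposal is correct and follows exactly the two-step strategy the paper itself prescribes for Theorem \ref{theorem:pyramid} (lower-bounding $dw(T)$ at a norm-attaining extreme point via an extreme supporting functional with $|f(Tv)|\geq\tfrac12$, then exhibiting $T_0$ with $w(T_0)=n(\mathcal{X})=\tfrac12$ and $dw(T_0)=\tfrac{\sqrt5}{2}$); the paper omits these details, but your identification of the dual extreme functionals, the vertex-orbit reduction, and the subcase analysis all check out. No gaps.
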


\smallskip

\begin{theorem}\label{theorem:prism pyramid}
Let $\mathcal{X}$ be a $3$-dimensional polyhedral Banach space such that $B_{\mathcal{X}}$ is a polyhedron with vertices $(\cos(j-1)\frac{\pi}{n}, \sin(j-1)\frac{\pi}{n},\pm{1}), (0,0,\pm 2)$, $ j \in \{1,2,\ldots,2n\}$, $ n\geq 3$. Then \[\eta_{dw}(\mathcal{X})= \begin{cases}
\sqrt{\sin^2\frac{\pi}{2n}+1}, &  \mbox{when n is odd}  \\
\sqrt{\tan^2\frac{\pi}{2n}+1}, &  \mbox{when n is even}.
\end{cases}
\] 

\end{theorem}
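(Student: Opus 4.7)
The plan is to establish, for these specific polyhedral spaces, the general identity
\[\eta_{dw}(\mathcal{X}) \;=\; \sqrt{\,n(\mathcal{X})^2+1\,},\]
and then substitute the values of $n(\mathcal{X})$ already recorded in \cite{SPBB}, namely $n(\mathcal{X})=\sin(\pi/2n)$ when $n$ is odd and $n(\mathcal{X})=\tan(\pi/2n)$ when $n$ is even. This identity holds for any finite-dimensional Banach space $\mathcal{X}$ in which the infimum defining $n(\mathcal{X})$ is attained by an explicit norm-one operator, and \cite{SPBB} supplies exactly such a minimizer in each of our two cases. The remaining content is then a two-sided matching argument.

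For the lower bound I would fix an arbitrary $T\in S_{\mathcal{L}(\mathcal{X})}$ and invoke \cite[Th. 3.12]{SMBP} to select a point $x_0\in W_T\cap M_T$, which is nonempty by finite-dimensionality. Choosing $x_0^*\in\mathcal{J}(x_0)$ with $|x_0^*(Tx_0)|=w(T)$ and using $\|Tx_0\|=1$ yields
\[ dw(T)\;\geq\;\sqrt{|x_0^*(Tx_0)|^2+\|Tx_0\|^4}\;=\;\sqrt{w(T)^2+1}\;\geq\;\sqrt{n(\mathcal{X})^2+1}, \]
the last inequality using $w(T)\geq n(\mathcal{X})\|T\|=n(\mathcal{X})$. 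Taking the infimum over $T\in S_{\mathcal{L}(\mathcal{X})}$ gives $\eta_{dw}(\mathcal{X})\geq\sqrt{n(\mathcal{X})^2+1}$, mirroring the strategy already used in the sufficient direction of Proposition \ref{equivalent}.

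For the matching upper bound I would import from \cite{SPBB} the concrete operator $T_0\in S_{\mathcal{L}(\mathcal{X})}$ that attains $w(T_0)=n(\mathcal{X})$. The Cauchy-type estimate underlying Proposition \ref{prop-bound eta} gives $dw(T_0)\leq\sqrt{w(T_0)^2+\|T_0\|^4}=\sqrt{n(\mathcal{X})^2+1}$, while applying the previous paragraph to $T_0$ itself produces the reverse inequality. Hence $dw(T_0)=\sqrt{n(\mathcal{X})^2+1}$, and therefore $\eta_{dw}(\mathcal{X})\leq\sqrt{n(\mathcal{X})^2+1}$. Combining the two bounds establishes the claimed identity, and substituting the known values of $n(\mathcal{X})$ delivers the two cases in the statement.

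The genuine difficulty of the theorem is not in the present argument but in the inputs from \cite{SPBB}: the exact evaluation of $n(\mathcal{X})$ for the prism-pyramid polyhedra, and the explicit construction of a numerical-index-attaining $T_0$. In particular, the even/odd dichotomy in the final answer reflects a corresponding geometric dichotomy among the facets of $B_{\mathcal{X}}$ and is visible only inside the computation of $n(\mathcal{X})$; once it has been performed, the passage from $n(\mathcal{X})$ to $\eta_{dw}(\mathcal{X})$ is the short two-sided estimate above, and is essentially independent of the particular polyhedral geometry.
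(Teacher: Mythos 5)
Your upper bound is exactly the paper's step $(2)$: take the operator $T_0$ from \cite{SPBB} with $\|T_0\|=1$ and $w(T_0)=n(\mathcal{X})$ and use $dw(T_0)\leq\sqrt{w(T_0)^2+\|T_0\|^4}$, whence $\eta_{dw}(\mathcal{X})\leq\sqrt{n(\mathcal{X})^2+1}$. The gap is in your lower bound. The set $W_T\cap M_T$ is \emph{not} nonempty for an arbitrary norm-one operator on a finite-dimensional space; the paper invokes \cite[Th. 3.12]{SMBP} only under the hypothesis $w(T)=\|T\|$ (in Proposition \ref{equivalent}, $(iii)\Rightarrow(i)$), where the conclusion is immediate anyway because $|x^*(Tx)|=\|T\|$ forces $\|Tx\|=\|T\|$. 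For the spaces of this theorem $n(\mathcal{X})<1$, so you cannot assume $w(T)=\|T\|$, and without a common attainment point the inequality $dw(T)\geq\sqrt{w(T)^2+\|T\|^4}$ simply fails: on $\mathbb{C}^2$ with $T=\left(\begin{smallmatrix}0&1\\0&0\end{smallmatrix}\right)$ one has $W_T=\{x:|x_1|=|x_2|=1/\sqrt 2\}$, $M_T=\{x:|x_2|=1\}$, these are disjoint, and $dw(T)=1$ while $\sqrt{w(T)^2+\|T\|^4}=\sqrt5/2$. This is also why your claimed general identity $\eta_{dw}(\mathcal{X})=\sqrt{n(\mathcal{X})^2+1}$ for finite-dimensional spaces cannot be right as stated (the infimum defining $n(\mathcal{X})$ is \emph{always} attained in finite dimensions): it contradicts Remark \ref{remneq}$(ii)$, where $\eta_{dw}(\mathcal{H})=1\neq\sqrt5/2=\sqrt{n(\mathcal{H})^2+1}$ for a finite-dimensional complex Hilbert space.

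The lower bound is precisely where the polyhedral geometry must enter, via Theorem \ref{theorem:estimate} and the computations of \cite{SPBB} that the paper declines to reproduce. Since $B_{\mathcal{X}}$ is a polytope, any $T\in S_{\mathcal{L}(\mathcal{X})}$ attains its norm at an extreme point $v_j$, and for every supporting functional $f_{jr}$ at $v_j$ one has $dw(T)\geq\sqrt{|f_{jr}(Tv_j)|^2+\|Tv_j\|^4}=\sqrt{|f_{jr}(Tv_j)|^2+1}$. One must then verify, vertex by vertex, that $\max_r|f_{jr}(Tv_j)|\geq\sin\frac{\pi}{2n}$ (resp. $\tan\frac{\pi}{2n}$) for $n$ odd (resp. even); that is, the numerical-radius-type estimate has to be \emph{localized at the norm-attaining point}, which is exactly what the facet analysis in \cite{SPBB} supplies and exactly the step your argument replaces with the false claim $W_T\cap M_T\neq\emptyset$. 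Consequently the ``geometry-independent passage from $n(\mathcal{X})$ to $\eta_{dw}(\mathcal{X})$'' you describe does not exist, and the even/odd dichotomy must be re-derived inside the Davis--Wielandt estimate at the extreme points rather than imported wholesale from the value of $n(\mathcal{X})$.
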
	

\smallskip

\begin{theorem}\label{theorem:odd}
Let $\mathcal{X}$ be a $3$-dimensional polyhedral Banach space such that $B_{\mathcal{X}}$ is a right prism whose base is a regular polygon having $2n$ sides.  Then \[\eta_{dw}(\mathcal{X})= \begin{cases}
\sqrt{\sin^2\frac{\pi}{2n}+1}, &  \mbox{when n is odd}  \\
\sqrt{\tan^2\frac{\pi}{2n}+1}, &  \mbox{when n is even}.
\end{cases}
\]  
\end{theorem}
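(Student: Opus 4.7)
The plan is to execute the two-step programme that the authors outline just before stating the theorem, specialised to the right prism $\mathcal{X}$ whose base is a regular $2n$-gon. Step $(1)$ of that programme is to import the value of $n(\mathcal{X})$ computed in \cite{SPBB}, namely $n(\mathcal{X})=\sin\frac{\pi}{2n}$ when $n$ is odd and $n(\mathcal{X})=\tan\frac{\pi}{2n}$ when $n$ is even; whereupon Proposition \ref{prop-bound eta} immediately delivers $\eta_{dw}(\mathcal{X})\le\sqrt{n^2(\mathcal{X})+1}$, matching the claimed value in both parities.

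For the matching lower bound I would fix an arbitrary $T\in S_{\mathcal{L}(\mathcal{X})}$ and try to locate a pair $(x_0,x^*_0)\in\Pi$ satisfying $\|Tx_0\|=1$ and $|x^*_0(Tx_0)|\ge n(\mathcal{X})$ simultaneously; together these force $dw(T)\ge\sqrt{n^2(\mathcal{X})+1}$, and taking the infimum over $T$ completes Step $(1)$. Finite-dimensionality of $\mathcal{X}$ guarantees $M_T\neq\emptyset$, so an $x_0$ satisfying the first condition exists automatically; the substance of the argument is to ensure that this $x_0\in M_T$ can be chosen so that some $x^*_0\in\mathcal{J}(x_0)$ witnesses $|x^*_0(Tx_0)|\ge n(\mathcal{X})$.

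To secure this simultaneity I would revisit the proof of the lower bound on $n(\mathcal{X})$ in \cite{SPBB}: that proof already constructs, for every $T\in S_{\mathcal{L}(\mathcal{X})}$, an extreme-extreme pair $(x,x^*)\in G$ (with $G$ as in Theorem \ref{prop-est}) for which $|x^*(Tx)|\ge n(\mathcal{X})$. The extra observation required here is that the vertex $x$ produced in that enumerative argument can always be aligned with a vertex of $B_{\mathcal{X}}$ lying in $M_T$, using the dihedral symmetry of the prism and the freedom in selecting which facet-adjacent vertex to designate as $x$. This simultaneity step is where the proof departs from the numerical-index computation, and is the part I expect to be the main obstacle, since \cite{SPBB} is not a priori concerned with the value of $\|Tx\|$, only with $|x^*(Tx)|$.

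For Step $(2)$ I would take the explicit extremal operator $T_0\in S_{\mathcal{L}(\mathcal{X})}$ with $w(T_0)=n(\mathcal{X})$ exhibited in \cite{SPBB}; in each parity it is essentially a small rotation in the plane of the $2n$-gonal base, extended trivially in the prism direction, so by symmetry every base vertex lies in $M_{T_0}$. Evaluating $\sqrt{|x^*(T_0 x)|^2+\|T_0 x\|^4}$ on the finite set $G$ of extreme-extreme pairs allowed by Theorem \ref{prop-est} then yields precisely $\sqrt{n^2(\mathcal{X})+1}$, giving $dw(T_0)=\sqrt{n^2(\mathcal{X})+1}$ and closing the gap between the two bounds.
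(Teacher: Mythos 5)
Your proposal follows exactly the two-step programme that the paper itself invokes for this theorem (the paper supplies no further detail, deferring the computations to \cite{SPBB}), so it is essentially the paper's own approach. The ``simultaneity'' step you flag as the main obstacle is in fact not one: the lower-bound argument of \cite{SPBB}, made explicit here in Theorem \ref{theorem:estimate}, is anchored from the outset at a norm-attaining extreme point $v_j\in M_T$ (every norm-one operator on a polyhedral space attains its norm at a vertex), so the pairs $(v_j,f_{jr})$ automatically satisfy both $\|Tv_j\|=1$ and $\max_r|f_{jr}(Tv_j)|\geq n(\mathcal{X})$, and no dihedral-symmetry realignment is needed.
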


\smallskip

\noindent We observe that Theorem \ref{theorem:odd} does not depend on the height $h$ (say) of the prism.  In particular, taking $ h=0$, we obtain the Davis-Wielandt index of the corresponding two-dimensional polyhedral Banach spaces. This fact is recorded in the following theorem.

\begin{theorem}\label{theorem:odd radius2}
Let $\mathcal{X}$  be a $2$-dimensional polyhedral Banach space such that $B_{\mathcal{X}}$ is a  regular $2n$-gon with vertices $\left (\cos(j-1)\frac{\pi}{n}, \sin(j-1)\frac{\pi}{n}\right)$,  $j \in \{1,2,\ldots,2n\}$. Then \[\eta_{dw}(\mathcal{X})= \begin{cases}
\sqrt{\sin^2\frac{\pi}{2n}+1}, &  \mbox{when n is odd}  \\
\sqrt{\tan^2\frac{\pi}{2n}+1}, &  \mbox{when n is even}.
\end{cases}
\]  
\end{theorem}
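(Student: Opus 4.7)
The plan is to adapt the two-step strategy that the authors outline immediately before the statements of Theorems \ref{theorem:pyramid}, \ref{theorem:prism pyramid}, and \ref{theorem:odd}. The remark made between Theorems \ref{theorem:odd} and \ref{theorem:odd radius2}---that the estimate in the prism case is insensitive to the height $h$ of the prism---motivates specialising the same calculation to $h=0$, at which point the prism degenerates to the $2n$-gon base. Concretely, I would import the exact values $n(\mathcal{X}) = \sin(\pi/2n)$ for odd $n$ and $n(\mathcal{X}) = \tan(\pi/2n)$ for even $n$ directly from \cite{SPBB}, and then establish $\eta_{dw}(\mathcal{X}) = \sqrt{n^2(\mathcal{X})+1}$ by two matching inequalities.

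For the lower bound I would in fact prove the pointwise statement $dw(T) \geq \sqrt{n^2(\mathcal{X})+1}$ for every $T \in S_{\mathcal{L}(\mathcal{X})}$. Since $\mathcal{X}$ is finite-dimensional and $B_{\mathcal{X}}$ is a polygon, $T$ attains its norm at some vertex $x_0 \in \textit{ext}~B_{\mathcal{X}}$, so $\|Tx_0\| = 1$. The regular $2n$-gon analysis in \cite{SPBB} shows that among the (at most two) extreme supporting functionals at $x_0$---which, by Lemma \ref{lemma:extreme}, are precisely the supporting functionals of the facets adjacent to $x_0$---there is one, call it $x_0^*$, with $|x_0^*(Tx_0)| \geq n(\mathcal{X})$. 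Then
\[
dw(T) \;\geq\; \sqrt{|x_0^*(Tx_0)|^2 + \|Tx_0\|^4} \;\geq\; \sqrt{n^2(\mathcal{X})+1}.
\]

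For the matching upper bound I would exhibit the operator $T_0 \in S_{\mathcal{L}(\mathcal{X})}$ constructed in \cite{SPBB} realising $w(T_0) = n(\mathcal{X})$. Since $|x^*(T_0 x)| \leq w(T_0)$ and $\|T_0 x\| \leq 1$ for every $(x,x^*) \in \Pi$,
\[
dw(T_0) \;\leq\; \sqrt{w(T_0)^2 + 1} \;=\; \sqrt{n^2(\mathcal{X})+1},
\]
and combining with the pointwise lower bound forces $dw(T_0) = \sqrt{n^2(\mathcal{X})+1}$, hence $\eta_{dw}(\mathcal{X}) = \sqrt{n^2(\mathcal{X})+1}$, which evaluates to the two declared expressions.

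The principal obstacle is the vertex-level claim in the lower bound: that at the chosen norm-attaining vertex $x_0$, some extreme supporting functional in $\mathcal{J}(x_0)$ already picks up value at least $n(\mathcal{X})$ on $Tx_0$. This is not a general finite-dimensional phenomenon---Remark \ref{remneq}(ii) shows it fails in complex Hilbert spaces---and must be extracted from the specific angular arithmetic of the regular $2n$-gon treated in \cite{SPBB}, which is precisely where the odd/even dichotomy enters. Once this vertex-level claim is in hand, the rest of the argument is formally identical to that of Theorem \ref{theorem:odd}, since the auxiliary prism direction contributed nothing to either $|x^*(Tx)|$ or $\|Tx\|$ and was effectively estimated away at height $h=0$.
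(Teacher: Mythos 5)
Your proposal is correct and follows essentially the same route as the paper: the paper obtains this theorem as the $h=0$ specialisation of Theorem \ref{theorem:odd}, which in turn is proved by exactly the two-step scheme you describe (a pointwise lower bound $dw(T)\geq\sqrt{n^2(\mathcal{X})+1}$ via norm attainment at an extreme point and the supporting-functional estimate from \cite{SPBB}, matched by the operator $T_0$ with $w(T_0)=n(\mathcal{X})$ and $dw(T_0)\leq\sqrt{w(T_0)^2+1}$). You also correctly isolate the one nontrivial ingredient --- the vertex-level claim that some extreme supporting functional at the norm-attaining vertex already achieves $n(\mathcal{X})$ --- which is precisely the content the paper imports from \cite{SPBB} and abstracts in Theorem \ref{theorem:estimate}.
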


\smallskip

\begin{remark} 
Comparing the polyhedral Banach spaces considered in Theorem \ref{theorem:pyramid} and Theorem \ref{theorem:prism pyramid} with $n=3$, we conclude that non-isometric Banach spaces may have the same Davis-Wielandt index. 
\end{remark}

\smallskip

\noindent In our next result we obtain a lower bound of the Davis-Wielandt index of polyhedral Banach spaces.

\begin{theorem}\label{theorem:estimate}
Let $\mathcal{X}$ be an $n$-dimensional polyhedral Banach space such that $\textit{ext}~B_{\mathcal{X}}=\{\pm v_i:i\leq i\leq m\}$. let $ F_{i1},F_{i2},$ $ \ldots,F_{in} $ be any $ n $ facets of $ B_{\mathcal{X}} $ meeting at $ v_i. $ Let $ f_{i1},f_{i2}, \ldots, f_{in} $ be  the $ n $ supporting functionals corresponding to the facets $ F_{i1},F_{i2},\ldots,F_{in} $, respectively. For each $ i=1,2,\ldots,m, $ let \[ \xi_i = \min_{x \in S_{\mathcal{X}}}\left\{ \max_{1 \leq r \leq n} \sqrt{|f_{ir}(x)|^2+1 } \right\}. \] 
Then 
\[\eta_{dw}(\mathcal{X}) \geq \min \{\xi_1,\xi_2,\ldots,\xi_m\}.\] 
\end{theorem}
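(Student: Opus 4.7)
The target inequality asks us to show that $dw(T) \geq \min_i \xi_i$ for every $T \in S_{\mathcal{L}(\mathcal{X})}$. My plan is to exploit the fact that, on a polyhedral Banach space, the norm of any operator is attained at an extreme point of $B_{\mathcal{X}}$.

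First, I would pick an arbitrary $T\in S_{\mathcal{L}(\mathcal{X})}$ and use the fact that the map $x \mapsto \|Tx\|$ is convex on the compact convex polytope $B_{\mathcal{X}}$. By Bauer's maximum principle it attains its maximum $\|T\| = 1$ at some extreme point; absorbing a sign if needed, there exists an index $i_0 \in \{1, \ldots, m\}$ such that $\|Tv_{i_0}\| = 1$. Note that $Tv_{i_0} \in S_{\mathcal{X}}$.

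Next I observe that for each $r \in \{1, \ldots, n\}$, the functional $f_{i_0 r}$ is the supporting functional of the facet $F_{i_0 r}$, and since $v_{i_0} \in F_{i_0 r}$, we have $f_{i_0 r}(v_{i_0}) = 1 = \|f_{i_0 r}\|$, i.e.\ $f_{i_0 r} \in \mathcal{J}(v_{i_0})$. Thus $(v_{i_0}, f_{i_0 r}) \in \Pi$, and by definition of the Davis-Wielandt radius,
\[
dw(T) \;\geq\; \sqrt{|f_{i_0 r}(Tv_{i_0})|^2 + \|Tv_{i_0}\|^4} \;=\; \sqrt{|f_{i_0 r}(Tv_{i_0})|^2 + 1}.
\]
Taking the maximum over $r = 1, \ldots, n$ on the right, and then invoking the definition of $\xi_{i_0}$ with the admissible choice $x = Tv_{i_0} \in S_{\mathcal{X}}$, I get
\[
dw(T) \;\geq\; \max_{1 \leq r \leq n}\sqrt{|f_{i_0 r}(Tv_{i_0})|^2 + 1} \;\geq\; \xi_{i_0} \;\geq\; \min_{1 \leq j \leq m}\xi_j.
\]
Since $T \in S_{\mathcal{L}(\mathcal{X})}$ was arbitrary, taking infimum on the left yields $\eta_{dw}(\mathcal{X}) \geq \min_j \xi_j$.

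The only nontrivial point is the very first step, namely that $T$ attains its norm at a vertex $\pm v_i$; this relies on convexity of $\|T\cdot\|$ and the polytope structure of $B_{\mathcal{X}}$, and is exactly what lets us align the sup defining $dw(T)$ with the vertex data that defines the $\xi_i$'s. The rest is a routine chain of inequalities that pairs the norm-attaining vertex with each of its $n$ supporting functionals.
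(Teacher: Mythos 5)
Your proof is correct and follows essentially the same route as the paper: both arguments hinge on the fact that $T$ attains its norm at some extreme point $\pm v_{j}$, pair that vertex with its $n$ supporting functionals (which lie in $\mathcal{J}(v_j)$, so the pairs belong to $\Pi$), and then use $Tv_j\in S_{\mathcal{X}}$ as an admissible point in the minimum defining $\xi_j$. You actually spell out a couple of details the paper leaves implicit (why $(v_j,f_{jr})\in\Pi$ and why $Tv_j$ is admissible in the min), but the argument is the same.
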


\begin{proof}
It is easy to observe that $ x \longrightarrow \max_{1 \leq r \leq n}\left\{\sqrt{|f_{ir}(x)|^2+1 } \right \} $, for each $ i=1,2,\ldots,m$,  is a continuous function from $S_{\mathbb{X}}$ to $\mathbb{R}$. Since $ S_{\mathbb{X}} $ is compact, $\xi_i= \min_{x \in S_{\mathcal{X}}}\left\{ \max_{1 \leq r \leq n} \sqrt{|f_{ir}(x)|^2+1 } \right\}$ exists. Let $ T \in S_{\mathcal{L}(\mathcal{X})} $ be arbitrary.  $ T $ must attain its norm at an extreme point of $ B_{\mathcal{X}}.$ Thus, there exists $ v_{j} $ such that $ \|Tv_{j}\|=\| T \|=1 $ for some $j \in \{1,2,\ldots,m\}.$ Therefore, we have
\begin{eqnarray*}
 dw(T) &=& \sup \left \{ \sqrt{|x^*(Tx)|^2+\|Tx\|^4}~:(x,x^*)\in \Pi \right \} \\
&\geq & \max_{1 \leq r \leq n} \left \{\sqrt{\vert f_{jr}(Tv_{j})\vert^2+1} \right \} \\
&\geq&\xi_{j}.
\end{eqnarray*}
This gives the desired inequality.
\end{proof}

\smallskip

\noindent As an illustrative application of the above theorem in estimating the Davis-Wielandt index of polyhedral Banach spaces, we consider a family of Banach spaces considered in \cite{MJ}. For each $\gamma \in (0,1),$ define a norm  $ {\Vert .\Vert}_\gamma  $ on $\mathcal{X}_{\gamma}= \{ (x,y,0) :  x,y\in \mathbb{R} \}$ as  \[{\Vert (x,y,0)\Vert}_\gamma=\max \left\{\vert y\vert, \vert x \vert+(1-\gamma)\vert y\vert \right\} ~~\forall~~(x,y,0)\in \mathbb{R}^3.\] The unit ball $B_{\mathcal{X}_{\gamma}}$ is a hexagon with vertices $\pm(\gamma,1,0),\pm(1,0,0),\pm(\gamma,-1,0)$.
We can estimate the Davis-Wielandt index of the $3$-dimensional polyhedral Banach space $ \mathcal{Y}_{\gamma} $ such that $B_{\mathcal{Y}_{\gamma}}$ is a right prism whose base is a hexagon $B_{\mathcal{X}_{\gamma}}.$ 
Let us remark that the proofs are omitted as they are lengthy and full of detailed calculations. We identify the extreme functionals of the unit ball of the dual space as the functionals corresponding to the facets of the unit ball of the space and then study the action of these functionals on the unit sphere to obtain a lower bound for the Davis-Wielandt index. 

\begin{theorem}\label{theorem:hexagon}
Let $\mathcal{Y}_{\gamma}$ be a $3$-dimensional polyhedral Banach space such that $B_{\mathcal{Y}_{\gamma}}$ is a right prism whose base is a hexagon $B_{\mathcal{X}_{\gamma}}$ defined as above.  Then \[\eta_{dw}(\mathcal{Y}_{\gamma})\geq  \begin{cases}
\sqrt{\frac{1}{(3-2\gamma)^2}+1}, & \mbox{when}~~ 0<\gamma\leq\frac{1}{2} \\
\sqrt{(1-\gamma)^2+1}, & \mbox{when}~~ \frac{1}{2}\leq\gamma<1.
\end{cases}
\]  
\end{theorem}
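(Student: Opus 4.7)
My plan is to apply Theorem \ref{theorem:estimate} to $\mathcal{Y}_\gamma$ directly. Interpreting the prism as $B_{\mathcal{X}_\gamma}\times[-1,1]$, its $12$ extreme points are the six hexagon vertices $\pm(\gamma,1),\pm(-\gamma,1),\pm(1,0)$ lifted to heights $\pm 1$, and its $8$ facets are the two hexagonal caps together with the six rectangular sides, one over each edge of $B_{\mathcal{X}_\gamma}$. A short computation of the supporting functionals of the hexagon, extended trivially in the third coordinate, combined with Lemma \ref{lemma:extreme} identifies $\textit{ext}~B_{\mathcal{Y}_\gamma^{*}}$ with
\[
\{\pm(0,0,1),\,\pm(0,1,0),\,\pm(1,1-\gamma,0),\,\pm(1,\gamma-1,0)\}.
\]
The symmetries of $B_{\mathcal{Y}_\gamma}$ (sign-flips in each coordinate) reduce the verification of Theorem \ref{theorem:estimate} to two representative vertices: the \emph{slanted} vertex $v_A=(\gamma,1,1)$, at which a cap and two side facets meet, and the \emph{equatorial} vertex $v_B=(1,0,1)$, at which the cap meets the two side facets adjacent to $(1,0)$.

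At $v_A$ the three supporting functionals are $f_1(x,y,z)=z$, $f_2(x,y,z)=y$, $f_3(x,y,z)=x+(1-\gamma)y$, so the quantity I must evaluate is
\[
\xi_A=\min_{u\in S_{\mathcal{Y}_\gamma}}\max\bigl\{|z|,\,|y|,\,|x+(1-\gamma)y|\bigr\},
\]
after which Theorem \ref{theorem:estimate} supplies the lower bound $\sqrt{\xi_A^{\,2}+1}$. At $v_B$ the functionals are $f_1=z$, $f_2=x+(1-\gamma)y$, $f_3=x-(1-\gamma)y$, and the elementary identity $\max\{|x+(1-\gamma)y|,\,|x-(1-\gamma)y|\}=|x|+(1-\gamma)|y|$ collapses the inner maximum, leaving
\[
\xi_B=\min_{u\in S_{\mathcal{Y}_\gamma}}\max\bigl\{|z|,\,|x|+(1-\gamma)|y|\bigr\}.
\]

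The $v_B$-calculation is straightforward: splitting on which of $|y|,\,|x|+(1-\gamma)|y|,\,|z|$ equals $1$ on the unit sphere, the case $|y|=1$ gives the inner maximum at least $1-\gamma$ (attained at $(0,1,0)$) while the other two cases give at least $1$, so $\xi_B=1-\gamma$. The $v_A$-calculation is the main obstacle: the cases $|z|=1$ and $|y|=1$ again force the inner maximum $M$ to be at least $1$, so the decisive case is $|x|+(1-\gamma)|y|=1$. Here the triangle inequality $|x|\le|x+(1-\gamma)y|+(1-\gamma)|y|$, combined with the bounds $M\ge|y|$ and $M\ge|x+(1-\gamma)y|$, yields $1\le(3-2\gamma)M$, and equality is attained at $\bigl(-\tfrac{2-\gamma}{3-2\gamma},\tfrac{1}{3-2\gamma},0\bigr)$; hence $\xi_A=\tfrac{1}{3-2\gamma}$. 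A direct computation shows $(1-\gamma)(3-2\gamma)=1$ precisely when $\gamma=\tfrac{1}{2}$, so $\min\{\xi_A,\xi_B\}$ equals $\xi_A$ for $0<\gamma\le\tfrac{1}{2}$ and equals $\xi_B$ for $\tfrac{1}{2}\le\gamma<1$; plugging these values into Theorem \ref{theorem:estimate} gives the claimed piecewise lower bound on $\eta_{dw}(\mathcal{Y}_\gamma)$.
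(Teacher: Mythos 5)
Your proposal is correct and follows exactly the strategy the paper indicates for this result (whose detailed proof it omits): identify the extreme dual functionals with the supporting functionals of the eight facets of the prism, reduce to the two symmetry classes of vertices, and feed the resulting minimax values $\xi_A=\tfrac{1}{3-2\gamma}$ and $\xi_B=1-\gamma$ into Theorem \ref{theorem:estimate}. The case analysis on the unit sphere and the crossover at $\gamma=\tfrac12$ both check out.
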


\smallskip

 \noindent Similarly, for $\xi \in (0,1)$ we consider $\mathcal{X}_{\xi}= \{ (x,y,0) : x,y \in \mathbb{R} \} $ and define a norm $ {\Vert .\Vert}_\xi$ on $\mathcal{X}_{\xi}$ as \[{\Vert (x,y,0)\Vert}_\xi=\max \left\{\vert x\vert,\vert y\vert,\frac{\vert x\vert+\vert y\vert}{1+\xi}\right\} ~~\forall~~(x,y,0)\in \mathbb{R}^3. \] The unit ball $B_{\mathcal{X}_{\xi}}$ is an octagon with vertices $\pm(1,\xi,0),\pm(1,-\xi,0),\pm(\xi,1,0),$ $ \pm(\xi,-1,0)$. We also state the following theorem without proof.
\begin{theorem}\label{theorem:octagon}
Let $\mathcal{Y}_{\xi}$ be a $3$-dimensional polyhedral Banach space such that $B_{\mathcal{Y}_{\xi}}$ is a right prism whose base is an octagon $B_{\mathcal{X}_{\xi}}$ defined as above. Then 
\[\eta_{dw}(\mathcal{Y}_{\xi}) \geq 
\begin{cases}
    \sqrt{\frac{1}{(2+\xi)^2}+1},& \text{when } \xi>\frac{1-\xi}{1+\xi} \\
    \sqrt{\left(\frac{1+\xi}{3+\xi}\right)^2+1},& \text{when } \xi\leq\frac{1-\xi}{1+\xi}.
\end{cases}
\] 
\end{theorem}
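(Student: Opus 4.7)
My plan is to deduce Theorem \ref{theorem:octagon} as a direct application of the general lower bound of Theorem \ref{theorem:estimate}, after assembling the combinatorial data of the octagonal prism $B_{\mathcal{Y}_{\xi}}$. As a right prism over the octagon $B_{\mathcal{X}_{\xi}}$, the body $B_{\mathcal{Y}_{\xi}}$ has $16$ extreme points $(\pm 1,\pm\xi,\pm 1)$ and $(\pm\xi,\pm 1,\pm 1)$, and $10$ facets: the two octagonal bases $\{z=\pm 1\}$ and eight lateral rectangles, one above each edge of the base. By Lemma \ref{lemma:extreme}, the extreme functionals of $B_{\mathcal{Y}_{\xi}^{*}}$ are, up to sign, $z,\ x,\ y,\ (x+y)/(1+\xi),\ (x-y)/(1+\xi)$. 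The reflection and coordinate-swap symmetries of the prism act transitively on pairs (vertex, triple of facets incident to it), so the quantity $\xi_{i}$ appearing in Theorem \ref{theorem:estimate} takes the same value at every vertex.

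Taking the representative vertex $v_{\ast}=(1,\xi,1)$, the three facets meeting at $v_{\ast}$ are $\{z=1\}$, $\{x=1\}$ and $\{(x+y)/(1+\xi)=1\}$, so the corresponding supporting functionals are $f_{1}=z,\ f_{2}=x,\ f_{3}=(x+y)/(1+\xi)$. Since $t\mapsto\sqrt{t^{2}+1}$ is increasing on $[0,\infty)$, computing $\xi_{\ast}$ reduces to evaluating
\[ \mu \;:=\; \min_{(x,y,z)\in S_{\mathcal{Y}_{\xi}}}\max\left\{|z|,\ |x|,\ \tfrac{|x+y|}{1+\xi}\right\}, \]
after which $\xi_{\ast}=\sqrt{\mu^{2}+1}$. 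I would first observe that the minimum is attained in the slice $z=0$, because the constraint $\|(x,y,z)\|=\max\{\|(x,y,0)\|_{\xi},|z|\}=1$ is still satisfied by any $(x,y,0)\in S_{\mathcal{X}_{\xi}}$, whereas increasing $|z|$ can only raise the first coordinate of the max.

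The heart of the proof is thus a two-dimensional optimization on $S_{\mathcal{X}_{\xi}}$. A quick check excludes the edges $\{x=\pm 1\}$, since they force $|x|=1$. On each remaining edge one performs a piecewise-linear analysis of the two functions $|x|$ and $|x+y|/(1+\xi)$ and locates the interior balance point. On an edge $y=\pm 1$ with $|x|\leq\xi$, the balance condition $|x|=|x\pm 1|/(1+\xi)$ gives $|x|=1/(2+\xi)$, which lies inside the edge precisely when $\xi\geq\sqrt{2}-1$, i.e.\ when $\xi\geq(1-\xi)/(1+\xi)$. On a slanted edge such as $-x+y=1+\xi$ with $x\in[-1,-\xi]$, equating $|x|$ with the linear expression for $|x+y|/(1+\xi)$ yields the balance $|x|=(1+\xi)/(3+\xi)$. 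A side-by-side comparison of these two candidate values then produces $\mu=1/(2+\xi)$ when $\xi>(1-\xi)/(1+\xi)$ and $\mu=(1+\xi)/(3+\xi)$ otherwise, at which point Theorem \ref{theorem:estimate} delivers the stated lower bound for $\eta_{dw}(\mathcal{Y}_{\xi})$. The main obstacle is precisely this final case analysis: tracking over all facets of $S_{\mathcal{Y}_{\xi}}$ which of the three piecewise-linear quantities $|z|,|x|,|x+y|/(1+\xi)$ is active, confirming that no interior configuration beats the two edge minima identified above, and verifying that the geometric threshold $\xi=\sqrt{2}-1$ coincides with the algebraic threshold $\xi=(1-\xi)/(1+\xi)$ appearing in the statement.
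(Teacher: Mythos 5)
Your proposal is correct and follows exactly the route the paper indicates but leaves unwritten: apply Theorem \ref{theorem:estimate} to the octagonal prism, identify the ten facet functionals $\pm z,\ \pm x,\ \pm y,\ \pm(x\pm y)/(1+\xi)$ via Lemma \ref{lemma:extreme}, use the symmetries of the prism to reduce to the single vertex $(1,\xi,1)$, and carry out the piecewise-linear minimax on the slice $z=0$. The balance values $1/(2+\xi)$ and $(1+\xi)/(3+\xi)$, the endpoint comparisons on the edges $y=\pm 1$ and the slanted edges, and the coincidence of the threshold $\xi=\sqrt{2}-1$ with $\xi=(1-\xi)/(1+\xi)$ all check out, so your computation in fact supplies the details the paper omits.
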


\smallskip


\noindent We note that all the relevant results in Section \ref{sec3} remain unchanged in case of computing the modified Davis-Wielandt index of the corresponding spaces.\\

\noindent Computation of the exact Davis-Wielandt index of a given normed linear space seems to be a non-trivial problem in the general most case. We end the article with the following open question in this regard, that we could not answer:\\

\noindent Open question: Let $\mathcal{X}$ be a normed linear space. Find separate complete characterizations of the following properties:

$(i)$ $\eta_{dw}(\mathcal{X})=1,$

$(ii)$ $\eta_{dw}(\mathcal{X})= \sqrt{n^2(\mathcal{X})+1}.$

\bibliographystyle{amsplain}

\end{document}